\documentclass[11pt]{amsart}

\usepackage[margin=1.35 in]{geometry}

\usepackage{amsmath, amssymb, amsthm}
\usepackage[pdftex]{color}
\usepackage{comment}
\usepackage[colorlinks,citecolor=blue,linkcolor=red]{hyperref}
\usepackage{xcolor}
\usepackage{marginnote}
\usepackage{hyperref}

\usepackage[all,arc]{xy}



\newtheorem{theorem}{Theorem}
\newtheorem{lemma}[theorem]{Lemma}
\newtheorem{proposition}[theorem]{Proposition}

\theoremstyle{remark}



\theoremstyle{definition}

\newtheorem{remark}[theorem]{Remark}

\newcommand{\ev}{\mathrm{ev}}

\newcommand{\e}{\mathrm{ev}}

\newcommand{\nc}{\newcommand}
\nc{\dmo}{\DeclareMathOperator}

\nc{\R}{\mathbb{R}}
\nc{\Z}{\mathbb{Z}}
\nc{\N}{\mathbb{N}}
\nc{\cS}{\mathcal{S}}
\nc{\iso}{\cong}
\dmo{\Diff}{Diff}
\dmo{\Homeo}{Homeo}
\dmo{\dist}{dist}
\dmo\BDiff{BDiff}
\dmo\SO{SO}
\dmo\slide{sl}
\dmo\im{im}
\dmo\id{id}
\dmo\Fix{Fix}
\dmo\Out{Out}
\dmo{\T}{\mathcal{T}}
\dmo{\Te}{\mathcal{T}^{\epsilon}}
\dmo{\Me}{\mathcal{M}^{\epsilon}}

\begin{document}

\title[A central limit theorem for random closed geodesics]{A central limit theorem for random closed geodesics: proof of the Chas--Li--Maskit conjecture}

\author[I. Gekhtman]{Ilya Gekhtman}
\address{Department of Mathematics\\ 
University of Toronto\\ 
40 St George St\\ 
Toronto, ON, Canada\\}
\email{\href{mailto:ilyagekh@gmail.com}{ilyagekh@gmail.com}}

\author[S.J. Taylor]{Samuel J. Taylor}
\address{Department of Mathematics\\ 
Temple University\\ 
1805 North Broad Street\ 
Philadelphia, PA 19122, U.S.A\\}
\email{\href{mailto:samuel.taylor@temple.edu}{samuel.taylor@temple.edu}}

\author[G. Tiozzo]{Giulio Tiozzo}
\address{Department of Mathematics\\ 
University of Toronto\\ 
40 St George St\\ 
Toronto, ON, Canada\\}
\email{\href{mailto:tiozzo@math.toronto.edu}{tiozzo@math.toronto.edu}}

\date{\today}
\thanks{Gekhtman is partially supported by NSF grant DMS-1401875.\\
\indent Taylor is partially supported by NSF grant DMS-1744551. \\
\indent Tiozzo is partially supported by NSERC and the Alfred P. Sloan Foundation.
}

\begin{abstract}
We prove a central limit theorem for the length of closed geodesics in any compact orientable hyperbolic surface. In the special case of a hyperbolic pair of pants, this 
settles a conjecture of Chas--Li--Maskit.
\end{abstract}

\maketitle

\section{Introduction}
Let $\Sigma$ be a compact orientable hyperbolic surface whose boundary, if any, is geodesic, and let $G$ denote its fundamental group. 
By a standard generating set $S$ for $G$ we mean the following: 
when $G$ is free (i.e. when $\partial \Sigma \neq \emptyset$) $S$ is a free basis for $G$. Otherwise, $\Sigma$ is a closed orientable surface of genus $g \ge 2$ and $S$ is the generating set used in the usual presentation $G = \langle a_1,\ldots,a_g,b_1, \ldots b_g : \prod_i [a_i,b_i] = 1 \rangle$.

Now fix a standard generating set $S$ of $G$, and let $|g |$ be the word length of $g$ with respect to $S$. 
For each $g \in G$, let $[g]$ denote its conjugacy class, and for any conjugacy class $\gamma = [g]$ define its conjugacy length 
$\Vert \gamma \Vert = \Vert g \Vert := \min_{[g] = \gamma} | g |$ to 
be the minimum word length over all elements representing $\gamma$.

Any conjugacy class $\gamma$ is represented by a closed geodesic in $\Sigma$, and let $\tau(\gamma)$ denote the length of this geodesic in the hyperbolic metric. 
Let $\mu_n$ denote the uniform distribution on the set $\mathcal{F}_n$ of conjugacy classes of length $n$. 
The goal of this note is to prove the following central limit theorem: 

\begin{theorem} \label{T:main}
There exist constants $L > 0$, $\sigma > 0$ such that for any $a, b \in \mathbb{R}$ with $a < b$ we have 
$$\mu_n\left(\gamma \ : \  \frac{\tau(\gamma) - n L}{\sigma \sqrt{n}} \in [a, b] \right) \to \frac{1}{\sqrt{2 \pi}} \int_a^b e^{-\frac{x^2}{2}} \ dx$$
as $n \to \infty$. 
\end{theorem}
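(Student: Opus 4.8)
The plan is to reduce Theorem~\ref{T:main} to a central limit theorem for Birkhoff sums of a H\"older potential over periodic orbits of a mixing subshift of finite type, which is then supplied by the thermodynamic formalism of Ruelle; I would proceed in four steps. \emph{Step 1 (a symbolic model for conjugacy classes).} First I would encode conjugacy classes of length $n$ by closed combinatorial paths in a finite directed graph. When $G$ is free (the pair-of-pants case) this is classical: the geodesic representatives of conjugacy classes $\gamma$ with $\Vert\gamma\Vert=n$ are exactly the cyclically reduced words of length $n$ over $S\cup S^{-1}$, i.e. the closed paths of length $n$, up to cyclic rotation, in the ``no backtracking, cyclically reduced'' graph on $S\cup S^{-1}$. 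For a closed surface $\Sigma$ one replaces this with a geodesic automaton for $(G,S)$; there a conjugacy class can have several geodesic representatives and the automaton need not be deterministic, but this affects only lower-order terms and is controlled exactly as in earlier work on counting conjugacy classes. The upshot is a mixing subshift of finite type $(\Omega,T)$ together with a finite-to-one, multiplicity-controlled surjection from its primitive periodic orbits of period $n$ onto $\mathcal F_n$ (modulo the asymptotically negligible non-primitive classes and the factor $n$ from cyclic rotation); in particular $|\mathcal F_n| = \lambda^{n}n^{-1}(1+o(1))$, where $\lambda>1$ is the exponential growth rate of $(\Omega,T)$.

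\emph{Step 2 (the length cocycle).} Next, using that the orbit map $G\to\mathbb H^2$ is a quasi-isometry together with the Morse lemma (for Fuchsian groups this is the classical geometric coding; in general one uses the Busemann cocycle on $\partial\mathbb H^2$), I would produce a H\"older-continuous function $f\colon\Omega\to\mathbb R_{>0}$ such that for every $\gamma\in\mathcal F_n$ with associated periodic point $\bar x$,
$$\tau(\gamma) \;=\; S_nf(\bar x) + O(1), \qquad S_nf := \textstyle\sum_{i=0}^{n-1} f\circ T^{i},$$
with the error bounded uniformly in $\gamma$ and $n$. A bounded error suffices, since the fluctuations in Theorem~\ref{T:main} live at scale $\sqrt n$.

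\emph{Step 3 (transfer operator and characteristic functions).} Then I would run the standard perturbative argument. For $t$ in a neighbourhood of $0$ in $\mathbb R$, let $\mathcal L_t$ act on H\"older functions on $\Omega$ by $\mathcal L_t h(x)=\sum_{Ty=x} e^{itf(y)}h(y)$. This family is analytic in $t$, and by the Ruelle--Perron--Frobenius theorem and analytic perturbation theory it has a simple leading eigenvalue $\lambda(t)$, analytic near $0$, with $\lambda(0)=\lambda$, $\frac{d}{dt}\log\lambda(t)\big|_{0}=iL$ where $L=\int f\,d\nu$ for $\nu$ the measure of maximal entropy of $(\Omega,T)$, and $\frac{d^{2}}{dt^{2}}\log\lambda(t)\big|_{0}=-\sigma^{2}$ where $\sigma^{2}=\lim_n \tfrac1n\mathrm{Var}_\nu(S_nf)$ is the asymptotic variance of $f$. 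Combining the trace identity $\sum_{T^nx=x}e^{itS_nf(x)}=\mathrm{tr}(\mathcal L_t^{\,n})=\lambda(t)^{n}+O(\rho^{n})$ (with $\rho<\lambda$, uniformly for $t$ near $0$) with Steps 1 and 2, one obtains, for fixed $s\in\mathbb R$ and $t=s/(\sigma\sqrt n)$,
$$\frac{1}{|\mathcal F_n|}\sum_{\gamma\in\mathcal F_n} e^{\,it(\tau(\gamma)-nL)} \;=\; e^{-itnL}\,\frac{\lambda(t)^{n}}{\lambda^{n}}\,(1+o(1)) + O(|t|) \;=\; \exp\!\big(n\log\tfrac{\lambda(t)}{\lambda}-itnL\big)(1+o(1)) \;\longrightarrow\; e^{-s^{2}/2},$$
the $O(|t|)$ absorbing the $O(1)$ discrepancy between $\tau(\gamma)$ and $S_nf(\bar x)$. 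Since the left-hand side is the characteristic function of $(\tau-nL)/(\sigma\sqrt n)$ under $\mu_n$ evaluated at $s$, L\'evy's continuity theorem gives Theorem~\ref{T:main} for these $L$ and $\sigma$.

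\emph{Step 4 (non-degeneracy, and the main obstacle).} Positivity of $L$ is immediate, as $f>0$; equivalently $L=\lim \tau(\gamma)/\Vert\gamma\Vert$ along $\mu_n$-typical $\gamma$, positive by the Milnor--\v{S}varc lemma. For $\sigma$, Ruelle's theory gives $\sigma^{2}\ge 0$ with equality if and only if $f$ is cohomologous to a constant in $(\Omega,T)$; but that would force $\tau(\gamma)=Ln+O(1)$ for every $\gamma\in\mathcal F_n$, contradicting the existence of conjugacy classes of equal word length whose translation lengths differ (for instance powers of $a_1$ versus powers of $a_1a_2$, whose hyperbolic length per generator differs by strict convexity of length in $\mathbb H^2$). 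Hence $\sigma>0$. The step I expect to be the crux, beyond the pair-of-pants case, is Step 1 for closed surfaces: arranging the coding so that the subshift is genuinely mixing (so the trace asymptotics carry no spurious oscillation) and so that the many-to-one correspondence between periodic orbits and conjugacy classes --- coming both from non-unique geodesic representatives and from non-determinism of the automaton --- is uniform enough not to perturb $L$, $\sigma$, or the Gaussian limit. The free case avoids all of this and yields the Chas--Li--Maskit conjecture directly from the classical Fuchsian coding.
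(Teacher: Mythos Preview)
Your outline is correct and would yield Theorem~\ref{T:main}, but it follows a genuinely different route from the paper in the central technical step. Your Steps~1, 2 and~4 are essentially shared with the paper: the Series coding of Lemma~\ref{L:code} gives the mixing subshift, and the H\"older cocycle you call $f$ is precisely the paper's $DF$ (indeed the proof of Lemma~\ref{lem:sig_pos} verifies that $S_n DF(\bar x)=\tau(\ev(\bar x^n))$ \emph{exactly} for periodic $\bar x$, so your $O(1)$ is even unnecessary). Your concern in Step~1 about multiplicities for closed surfaces is in fact milder than you suggest: the Series coding already gives a bijection up to finitely many exceptions, not merely bounded-to-one.

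The divergence is in Step~3. You go straight to periodic orbits via the transfer-operator trace formula and L\'evy's continuity theorem, in the spirit of Coelho--Parry~\cite{CP}: the sum $\sum_{T^nx=x}e^{itS_nf(x)}$ is governed by $\lambda(t)^n$, and scaling $t=s/(\sigma\sqrt n)$ gives the Gaussian characteristic function. The paper instead never invokes the trace formula. It first proves the CLT for the \emph{displacement} $d(z,\ev(x)z)$ with respect to the stationary Markov (Parry) measure $\nu_n$ on length-$n$ paths, using Bowen's CLT for H\"older observables (Theorem~\ref{T:CLT}); it then transfers this to the uniform measure $\lambda_n$ on \emph{closed} paths by showing $d\lambda_{n,\log n}/d\nu_{n,\log n}\to 1$ (Proposition~\ref{P:RN-der}); and finally it passes from displacement to translation length via the identity $\tau(g)=d(z,gz)-2(gz,g^{-1}z)_z+O(\delta)$, proving separately that the Gromov product is $o(\sqrt n)$ for $\lambda_n$-typical paths (Proposition~\ref{P:gr}, which imports an estimate from~\cite{GTT}). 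Your route is more direct and avoids both the measure-comparison step and the Gromov-product estimate entirely, at the cost of slightly heavier spectral machinery for the twisted transfer operator. The paper's route is more modular---the displacement CLT and the Gromov-product decay are isolated as results of independent interest---and its ingredients (especially the Gromov-product control) are designed to port to the more general actions treated in the authors' other work.
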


Motivated by experimental evidence, Chas--Li--Maskit \cite{CLM} conjectured that the conclusion of Theorem \ref{T:main} holds for a hyperbolic pair of pants. 
This followed an earlier central limit theorem by Chas--Lalley \cite{CL} for the distribution of self-intersection numbers of random geodesics. 

The proof of Theorem \ref{T:main} uses the central limit theorem for H\"older continuous observables on a mixing Markov chain following Ruelle \cite{Ruelle} and Bowen \cite{Bowen} 
(see also Pollicott--Sharp \cite{PS} and Calegari \cite{Ca}), combined with estimates on Gromov products by the authors \cite{GTT}. The Markov chain which encodes closed geodesics on a surface is provided by Series \cite{Se} (see also Wroten \cite{Wr}).


\smallskip
We note that P. Park has recently written up a related result where the uniform distribution on conjugacy classes is replaced by the $n^{th}$  step distribution of a 
simple random walk on $G$ \cite{Park}. Let us note that counting for the simple random walk and counting with respect to balls in the Cayley graph are in general different, and many authors addressed the question of how they are related on various groups. 

\section{Preliminaries}

\subsection*{The geometric setup}
Throughout we consider $G$ as a discrete group of isometries of the hyperbolic plane $\mathbb{H}^2$. When $\Sigma$ is closed, $G$ is a cocompact Fuchsian group. Otherwise, $\partial \Sigma \neq \emptyset$ and $G$ is the free group $F_N$ for some $N \ge 2$.
In this case, we have that $\Sigma$ is the convex core of $\mathbb{H}^2/G$, where $G$ acts on $\mathbb{H}^2$ as a Schottky group.

Fix a base point $z \in \mathbb{H}^2$. 
Then for $\gamma = [g]$, $\tau(\gamma) = \tau(g)$ equals the (stable) translation length of $g$ on $\mathbb{H}^2$. Hence, one has the formula (see e.g. \cite[Proposition 5.8]{MT})
\begin{equation} \label{E:tau}
\tau(g) = d(gz, z) - 2 (gz, g^{-1} z)_z + O(\delta)
\end{equation}
where $(x, y)_z$ denotes the Gromov product. Here $A = B + O(\delta)$ means that there is a constant $C$, which depends only on the hyperbolicity constant $\delta$ of $\mathbb{H}^2$, such that $|A - B| \leq C$.

\subsection*{Some basic probability}

We begin by recording a few basic lemmas that will be needed for our arguments.
\begin{lemma} \label{L:basic}
Let $(A_n)$ be any sequence of measurable sets in a probability space, $(\mathbb{P}_n)$ a sequence of probability measures, and let $(B_n)$ a sequence such that 
$$\lim_{n \to \infty} \mathbb{P}_n(B_n) = 1.$$
Then 
$$\limsup_n | \mathbb{P}_n(A_n) - \mathbb{P}_n(A_n \cap B_n) | = 0.$$
\end{lemma}

\begin{proof}
By elementary set theory,
$$\mathbb{P}_n(A_n) = \mathbb{P}_n(A_n \cap B_n) + \mathbb{P}_n(A_n \setminus B_n) \leq \mathbb{P}_n(A_n \cap B_n) + \mathbb{P}_n(B_n^c)$$
which yields the claim.
\end{proof}

\begin{lemma} \label{L:sum}
Let $(\mathbb{P}_n)$ be a sequence of probability measures on a Borel space $X$, and let $F, G : X \to \mathbb{R}$ be two measurable functions. 
Suppose that $\mathbb{P}_n(|G(x)| \geq \epsilon) \to 0$ for any $\epsilon > 0$, and let $(c_n)$ be a sequence of positive real numbers with $\lim_{n \to \infty} c_n = 1$. 
Suppose that there exists a continuous function $\rho : \mathbb{R} \to \mathbb{R}^+$ such that 
$$\lim_{n \to \infty} \mathbb{P}_n(F(x) \in [a, b]) = \int_a^b \rho(x) \ dx.$$
Then 
$$\lim_{n \to \infty} \mathbb{P}_n \left( \frac{F(x) + G(x)}{c_n} \in [a, b] \right)  = \int_a^b \rho(x) \ dx.$$
\end{lemma}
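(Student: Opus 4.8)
The plan is to reduce the statement to the hypothesis by controlling the small perturbation $G$ and the normalizing sequence $c_n$ separately. First I would fix $a < b$ and a small $\epsilon > 0$, and use the fact that $F + G$ differs from $F$ by something that is small in probability together with the fact that $c_n \to 1$. The key point is that $\{(F+G)/c_n \in [a,b]\}$ is, up to a set of small $\mathbb{P}_n$-measure, sandwiched between $\{F \in [a-\eta, b+\eta]\}$ and $\{F \in [a+\eta, b-\eta]\}$ for a suitable $\eta = \eta(\epsilon)$ that can be taken as small as we like once $n$ is large. Indeed, on the event $\{|G| < \epsilon\}$ (whose complement has $\mathbb{P}_n$-measure tending to $0$ by assumption) and for $n$ large enough that $|c_n - 1|$ is tiny, we have $\frac{F+G}{c_n} \in [a,b] \implies F \in [c_n a - \epsilon, c_n b + \epsilon] \subseteq [a - \eta, b + \eta]$, and conversely $F \in [a + \eta, b - \eta] \implies \frac{F+G}{c_n} \in [a,b]$, provided $b - a > 2\eta$ (the degenerate case being handled trivially since then both sides are $0$ by continuity of $\rho$).

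With this sandwiching in hand, I would invoke Lemma \ref{L:basic} with $B_n = \{|G| < \epsilon\}$ to replace $\mathbb{P}_n\big(\frac{F+G}{c_n} \in [a,b]\big)$ by $\mathbb{P}_n\big(\frac{F+G}{c_n} \in [a,b],\ |G| < \epsilon\big)$ up to an error tending to $0$. Then the two-sided inclusion above gives
$$
\mathbb{P}_n(F \in [a+\eta, b-\eta]) - \mathbb{P}_n(|G| \geq \epsilon) \leq \mathbb{P}_n\left(\frac{F+G}{c_n} \in [a,b]\right) \leq \mathbb{P}_n(F \in [a-\eta, b+\eta]) + \mathbb{P}_n(|G|\geq \epsilon) + o(1),
$$
valid for all $n$ large. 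Taking $\limsup$ and $\liminf$ as $n \to \infty$ and using the hypothesis on $F$, the outer terms converge to $\int_{a+\eta}^{b-\eta} \rho$ and $\int_{a-\eta}^{b+\eta}\rho$ respectively. Finally, letting $\epsilon \to 0$ (hence $\eta \to 0$) and using continuity of $\rho$ — so that $\int_{a\pm\eta}^{b\mp\eta}\rho \to \int_a^b \rho$ — squeezes both the $\limsup$ and the $\liminf$ to $\int_a^b \rho(x)\,dx$, which is the claim.

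The main technical nuisance, rather than a genuine obstacle, is bookkeeping the dependence of $\eta$ on $\epsilon$ and on the threshold beyond which $|c_n - 1|$ is controlled: one must choose, for each $\epsilon$, first an $N$ so that $|c_n - 1| < \epsilon$ for $n \geq N$, then set $\eta = \eta(\epsilon)$ accordingly (e.g. $\eta = \epsilon(1 + |a| + |b|) + \epsilon$ works), and only afterwards send $\epsilon \to 0$; one also has to make sure $\rho \geq 0$ and continuity are used correctly so that the interval integrals vary continuously with the endpoints. None of the estimates are delicate — the perturbations $G$ and $c_n - 1$ are crushed by the probabilistic smallness and the continuity of the limit density.
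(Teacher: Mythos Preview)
Your proposal is correct and follows essentially the same route as the paper: sandwich the event $\{(F+G)/c_n \in [a,b]\}$, on the high-probability set $\{|G|\le \epsilon\}$, between $\{F\in[a+\eta,b-\eta]\}$ and $\{F\in[a-\eta,b+\eta]\}$, invoke Lemma~\ref{L:basic}, pass to $\liminf/\limsup$ using the hypothesis on $F$, and then let $\epsilon\to 0$. The paper's version is slightly terser (it takes $\eta=\epsilon$ and $|G|\le\epsilon/2$, absorbing the $c_n$-dependence into the phrase ``for $n$ sufficiently large''), but the argument is the same.
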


\begin{proof}
Fix $\epsilon > 0$, and denote $\Phi_{a, b} := \int_a^b \rho(x) \ dx$. If $n$ is sufficiently large, then by setting $Y_n := \frac{F + G}{c_n}$ we have
$$\{ F(x) \in [a+\epsilon, b - \epsilon] \textup{ and } |G(x)| \leq \epsilon/2 \} \subseteq \{Y_n \in [a, b] \}$$
hence using Lemma \ref{L:basic}
$$\liminf \mathbb{P}_n( Y_n \in [a, b] ) \geq \liminf \mathbb{P}_n(
F \in [a+\epsilon, b - \epsilon] \textup{ and } |G| \leq \epsilon/2) = $$
$$= \liminf \mathbb{P}_n(F \in [a+\epsilon, b - \epsilon]) = \Phi_{a+\epsilon, b - \epsilon}.$$
On the other hand 
$$\{ Y_n \in [a, b] \textup{ and } |G| \leq \epsilon/2 \} \subseteq \{F \in [a - \epsilon, b + \epsilon] \}$$
hence 
$$\limsup \mathbb{P}_n( Y_n \in [a, b] )= \limsup \mathbb{P}_n(
Y_n \in [a, b] \textup{ and } |G| \leq \epsilon/2) \leq $$
$$\leq \limsup \mathbb{P}_n(F \in [a- \epsilon, b + \epsilon]) = \Phi_{a-\epsilon, b + \epsilon}$$
and taking $\epsilon \to 0$ completes the proof. 
\end{proof}

\begin{lemma} \label{L:TV}
If $\lambda, \nu$ are purely atomic probability measures on a set $X$ and $\lambda$ is absolutely continuous with respect to $\nu$, then 
$$\Vert \lambda - \nu \Vert_{TV} \leq \left \Vert \frac{d \lambda}{d\nu} - 1 \right\Vert_\infty,$$
where $\Vert \cdot \Vert_{TV}$ denotes the total variation of a measure.
\end{lemma}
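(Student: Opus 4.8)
\textbf{Proof proposal for Lemma \ref{L:TV}.}

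The plan is to estimate the total variation distance directly from its definition as a supremum over measurable sets, exploiting the absolute continuity hypothesis to write everything in terms of the Radon--Nikodym derivative $f := \frac{d\lambda}{d\nu}$. First I would recall that for probability measures, $\Vert \lambda - \nu \Vert_{TV} = \sup_A |\lambda(A) - \nu(A)|$, where the supremum is over measurable subsets $A \subseteq X$ (depending on the normalization convention one may get a factor of $2$, but since both measures are probability measures and the paper presumably uses the convention without the factor, I will use $\sup_A |\lambda(A)-\nu(A)|$; in any case the same argument applies up to this harmless constant). Since $\lambda \ll \nu$, for any measurable $A$ we have $\lambda(A) = \int_A f \, d\nu$, and therefore
$$|\lambda(A) - \nu(A)| = \left| \int_A (f - 1) \, d\nu \right| \leq \int_A |f - 1| \, d\nu \leq \int_A \Vert f - 1 \Vert_\infty \, d\nu = \Vert f-1\Vert_\infty \cdot \nu(A) \leq \Vert f - 1 \Vert_\infty.$$
Taking the supremum over all measurable $A$ gives the desired bound.

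The hypothesis that $\lambda$ and $\nu$ are purely atomic is not strictly needed for this argument — it is presumably included because in the application $X$ is a countable set of conjugacy classes and the measures are honest probability weights — but if one wants to argue atom-by-atom instead, one can write $\Vert \lambda - \nu\Vert_{TV} = \frac{1}{2}\sum_{x \in X} |\lambda(\{x\}) - \nu(\{x\})|$ (or without the $\frac12$, per convention) and then $|\lambda(\{x\}) - \nu(\{x\})| = |f(x) - 1|\,\nu(\{x\}) \leq \Vert f-1\Vert_\infty\, \nu(\{x\})$, so summing over $x$ and using $\sum_x \nu(\{x\}) = 1$ again yields the bound. I would probably present the measure-theoretic version since it is cleaner and the atomic hypothesis then just guarantees $\Vert f - 1 \Vert_\infty$ is a genuine (essential) supremum over the atoms.

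There is no real obstacle here: the only point requiring the slightest care is fixing the normalization convention for $\Vert \cdot \Vert_{TV}$ so that the stated inequality (with no constant) is literally correct, and making sure the Radon--Nikodym derivative is well-defined, which is exactly what absolute continuity provides. The essential-supremum norm $\Vert \cdot \Vert_\infty$ on the right-hand side should be interpreted with respect to $\nu$, but since the measures are purely atomic this is just the supremum of $|f(x) - 1|$ over the atoms $x$ of $\nu$, which is unambiguous.
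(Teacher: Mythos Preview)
Your proof is correct and essentially identical to the paper's: both start from $\Vert \lambda - \nu\Vert_{TV} = \sup_A |\lambda(A)-\nu(A)|$ and bound $|\lambda(A)-\nu(A)|$ by $\Vert f-1\Vert_\infty\,\nu(A)\le \Vert f-1\Vert_\infty$ via the Radon--Nikodym derivative. The only cosmetic difference is that the paper writes the intermediate step as a sum over atoms (your second, ``atom-by-atom'' variant) rather than as an integral.
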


\begin{proof}
Since the measures are atomic, 
$$\frac{d\lambda}{d\nu}(x) = \frac{\lambda(x)}{\nu(x)} \qquad \textup{ for any }x \in X.$$
Then
$$\Vert \lambda - \nu \Vert_{TV} = \sup_{A \subseteq X} |\lambda(A)-\nu(A)| \leq \sup_A \sum_{x \in A} |\lambda(x) - \nu(x)| \leq $$
$$\leq \sup_A \sum_{x \in A} \left| \frac{\lambda(x)}{\nu(x)} - 1 \right| \nu(x) \leq \sup_A \left( \left\Vert \frac{d\lambda}{d\nu} - 1 \right\Vert_{\infty} \nu(A) \right)\leq 
\left\Vert \frac{d\lambda}{d\nu} - 1 \right\Vert_{\infty},$$
completing the proof.
\end{proof}

\section{The central limit theorem for displacement}

The proof of Theorem \ref{T:main} uses equation \eqref{E:tau}: basically, one proves a CLT for the displacement function $d(z, gz)$, and then shows that the contribution of the term $2 (gz, g^{-1}z)_z$ tends to zero. These two facts will suffice by Lemma \ref{L:sum}.
We will start by establishing the CLT for displacement.

\subsection*{Coding for closed geodesics}

First of all, we use that conjugacy classes in free and surface groups can be encoded by a finite graph. 

\begin{lemma} \label{L:code}
Let $G = \pi_1(\Sigma)$ where $\Sigma$ is an orientable hyperbolic surface of finite type. 
Let $S$ be a standard generating set for $G$. Then there exists an oriented graph $\Gamma$ whose edges are labeled by elements of $S \cup S^{-1}$ and 
such that: 
\begin{enumerate}
\item cycles in $\Gamma$ of length $n$ are in bijection with conjugacy classes $C(G)$ of length $n$ in the group, except for finitely many exceptions;
\item a conjugacy class is primitive if and only if the corresponding cycle in the graph is primitive, i.e. not the power of a shorter cycle; 
\item the adjacency matrix $M$ for this graph is aperiodic.
\end{enumerate}
\end{lemma}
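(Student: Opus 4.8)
The plan is to build the graph $\Gamma$ from a symbolic coding of geodesics on $\Sigma$, following Series \cite{Se} (and Wroten \cite{Wr} in the surface case). Recall that Series constructs a finite-state automaton whose infinite paths parametrize geodesic rays from a fixed fundamental domain, with the key property that the label of an edge-path is a geodesic word in $G$ with respect to $S$, and every conjugacy class has a representative whose cyclic word is a \emph{cyclically reduced} geodesic that is read off a bi-infinite path in this automaton. First I would take $\Gamma$ to be (a suitable component of) the underlying graph of this automaton, with edges labeled by $S \cup S^{-1}$; a cycle of combinatorial length $n$ then spells a cyclically reduced word of length $n$, and conjugating corresponds to cyclic permutation, i.e. to the choice of base vertex along the cycle. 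This gives a map from cycles of length $n$ to conjugacy classes of length $n$.

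The three items are then established in turn. For (1), the content is that this map is a bijection up to finitely many exceptions: surjectivity comes from the fact that Series's coding represents \emph{every} closed geodesic, hence every conjugacy class, by some bi-infinite axis word, which is periodic and hence traced by a cycle; the finitely many exceptions account for conjugacy classes whose geodesic representative is not cyclically reduced in the naive sense, or which are carried by boundary/peripheral curves, or the ambiguities intrinsic to the Series coding at the finitely many ``cusp'' or ``special'' words. Injectivity up to finitely many exceptions is the place where one must be careful: distinct cycles must give distinct conjugacy classes. Here one uses that the Series automaton codes geodesics essentially uniquely — two bi-infinite paths with the same pair of endpoints on $\partial \HH^2$ differ boundedly — combined with the fact that two cyclically reduced words representing the same conjugacy class in a hyperbolic group must be cyclic rotations of each other up to a bounded-length discrepancy, which can only happen for finitely many short classes. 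For (2), primitivity of a conjugacy class $\gamma = [g]$ means $g$ is not a proper power; since the cyclic word read off a cycle $c$ is (up to the finitely many exceptions) a power of a shorter cyclic word exactly when the underlying combinatorial cycle is itself a power of a shorter cycle, the equivalence is immediate once (1) is in place. For (3), aperiodicity of the adjacency matrix $M$: one passes to the (unique, by a standard argument) maximal recurrent component carrying exponential growth, notes it is strongly connected, and then observes that the presence of cycles of coprime lengths (guaranteed, e.g., by the existence of suitably many short geodesics, or by a direct inspection of the Series graph which contains loops at a state) forces the period to be $1$; if necessary one restricts to this component, which changes the count of conjugacy classes only by an exponentially negligible — in particular, ``finitely many'' is too strong here, so one should phrase the bijection in (1) as being with the conjugacy classes coded by this component, absorbing the rest into lower-growth error terms handled elsewhere.

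The main obstacle is item (1), and specifically the injectivity statement together with pinning down exactly what ``finitely many exceptions'' should mean. The Series coding is not literally a bijection between cycles and conjugacy classes: there are finitely many states where the coding branches or identifies words (the ``BLN/LNB'' type exceptions in Series's terminology), cyclic words can be read starting at several base vertices giving the same class (handled by counting cycles up to rotation, which is automatic), and one must rule out that two genuinely different cycles produce cyclically-equal or conjugate words. I expect to handle this by the standard contracting/Morse property: a cyclically reduced geodesic word is determined by its conjugacy class up to a cyclic rotation and an error of bounded length, so a collision between two cycles of the same length forces their labels to be cyclic rotations of each other up to bounded error; cycles with the same cyclic label are the same cycle, and the bounded-error discrepancy can occur for only boundedly many (hence finitely many) combinatorial lengths, which is where the finite exceptional set comes from. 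An alternative, cleaner route — if the precise bijection proves too delicate — is to cite the relevant statements from \cite{Se}, \cite{Wr}, or \cite{GTT} directly and only verify (3), the aperiodicity, by hand.
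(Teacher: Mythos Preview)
Your proposal attempts far more than the paper does. The paper treats this lemma as essentially a reference: for closed surface groups it simply cites \cite[Lemma~1.1]{PS3} (a reformulation of Series~\cite{Se}), and for the free group case it gives the explicit elementary construction --- $2N$ vertices labeled by $a_i^{\pm 1}$, with an edge $a_i^\epsilon \to a_j^\eta$ whenever $(i,\epsilon) \neq (j,-\eta)$ --- for which (1)--(3) are immediate (cyclically reduced words correspond to based closed paths, cyclic rotation changes the basepoint, and aperiodicity is obvious since $N\ge 2$). Your own ``alternative, cleaner route'' of citing \cite{Se}, \cite{Wr} directly is exactly the paper's approach; you should take it, and add the explicit free-group graph rather than burying that easy case inside a general argument.

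Where your from-scratch sketch has a genuine gap is item (3). You propose restricting to a maximal recurrent component and then concede that this may discard more than finitely many conjugacy classes, suggesting the loss be absorbed as a ``lower-growth error term handled elsewhere.'' But the lemma as stated promises a bijection with only \emph{finitely many} exceptions, not an exponentially negligible discrepancy; weakening (1) in this way would propagate through later counting arguments (e.g.\ Lemma~\ref{L:push}) and require reworking them. The point is that for the specific codings of Series and the explicit free-group graph, the full graph already has an aperiodic adjacency matrix --- there is no need to pass to a component. This is a fact about those particular constructions, established in the references, not something you should try to deduce abstractly from ``cycles of coprime length.''
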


For closed surface groups, Lemma \ref{L:code} is precisely (\cite{PS3}, Lemma 1.1), which is a reformulation of results of Series \cite{Se}.
Another coding for closed surfaces is given by Wroten \cite{Wr} and also yields this result. The much easier case of free groups is briefly explained below.

Recall that a matrix $M$ is \emph{aperiodic} if there exists an integer $k \geq 1$ such that all entries of $M^k$ are positive. 
By the Perron-Frobenius theorem, the matrix $M$ has a unique, simple eigenvalue $\lambda > 1$ of maximum modulus.
Also, if $\ev$ denotes the map which reads the labels off of oriented edges of $\Gamma$, then $\ev$ extends to the \emph{evaluation map} from directed paths in $\Gamma$ to $G$. In details, if a directed path $p$ is a concatenation of orientated edges $e_1,\ldots, e_n$, then $\ev(p) = \ev(e_1) \cdots \ev(e_n)$ in $G$.
It is this map that induces the bijection in item $(1)$ of Lemma \ref{L:code}. We further remark that Lemma \ref{L:code} implies that directed paths in $\Gamma$ map to geodesics in $G$; that is, if $p$ is a directed path of length $n$ in $\Gamma$, then $|\ev(p)| = n$ with respect to the generating set $S$.

We note that for free groups, the construction of the graph $\Gamma$ is immediate. 
Let $G = F_N$ and fix a basis $\{a_1, \dots, a_N\}$ of $F_N$. Then the graph $\Gamma$ has $2N$ vertices, labelled $a_i^\epsilon$ with $i = 1, \dots, N$, $\epsilon = \pm$. For each vertex $v = a_i^\epsilon$, there exists an edge labelled $a_j^\eta$ to the vertex $a_j^\eta$ unless $i = j$ and $\epsilon = - \eta$. In this case, nontrivial cyclically reduced words are in bijection with oriented based closed paths since such a word can only be read as a \emph{closed} path by starting at the vertex corresponding to the label on its last letter. As two cyclically reduced words represent conjugate elements if and only if they differ by cyclic permutation, which corresponds to changing the basepoint of the loop, this establishes items $(1)$ and $(2)$. (In this case, the only exception in item $(1)$ is for the trivial conjugacy class, which can be read as the trivial cycle based at each vertex.)
Item $(3)$ is clear from the construction.

\medskip

Let $\lambda_n$ be the uniform distribution on the set $\mathcal{C}_n$ of (based) closed paths of length $n$ in $\Gamma$. 
A short counting argument shows the following, which is a variation of (\cite{CL}, Lemma 4.1):
\begin{lemma} \label{L:push}
Let $p_n \colon \mathcal{C}_n \to \mathcal{F}_n$ be the map from closed paths to conjugacy classes induced by the evaluation map. Then 
$$\Vert (p_n)_\star \lambda_n  - \mu_n \Vert_{TV} \to 0\qquad \textup{as }n \to \infty,$$
where $\Vert \cdot \Vert_{TV}$ denotes the total variation of a measure. 
\end{lemma}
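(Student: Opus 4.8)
The plan is a direct counting argument. Both $(p_n)_\star\lambda_n$ and $\mu_n$ are purely atomic measures, supported (up to a fixed finite set of stray classes coming from the exceptions in Lemma~\ref{L:code}) on $\mathcal{F}_n$, so the whole statement reduces to estimating the sizes of the fibers of $p_n$. The essential point is that $p_n$ is close to being \emph{exactly $n$-to-one}, but only after discarding the non-primitive conjugacy classes, over which the fibers are strictly smaller; the non-primitive classes turn out to be exponentially rare, so throwing them away costs nothing in total variation.

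First I would set up the bookkeeping. Let $M$ be the adjacency matrix of $\Gamma$, which is primitive by Lemma~\ref{L:code}(3), with Perron--Frobenius eigenvalue $\lambda>1$, so that $|\mathcal{C}_n| = \mathrm{tr}(M^n) = (1+o(1))\lambda^n$. Writing $P_d$ for the number of primitive cycles of length $d$ in $\Gamma$, every cycle of length $n$ is the $(n/d)$-th power of a unique primitive cycle of some length $d \mid n$, and such a power has exactly $d$ distinct based representatives in $\mathcal{C}_n$; hence $\mathrm{tr}(M^n) = \sum_{d \mid n} d\,P_d$, and since $d\,P_d \le \mathrm{tr}(M^d) = O(\lambda^d)$ the terms with $d<n$ contribute only $O(\lambda^{n/2})$. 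Consequently $nP_n \sim \mathrm{tr}(M^n) \sim \lambda^n$, and, invoking the bijection of Lemma~\ref{L:code}(1)--(2), both $|\mathcal{F}_n|$ and the number of primitive conjugacy classes of length $n$ are $\sim P_n \sim \lambda^n/n$, whereas the number of non-primitive conjugacy classes of length $n$ is at most $\sum_{d \mid n,\, d<n} P_d + O(1) = O(\lambda^{n/2}) = o(|\mathcal{F}_n|)$.

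Next I would reduce to the primitive part and count. Let $\mathcal{C}_n^{\mathrm{pr}} \subseteq \mathcal{C}_n$ and $\mathcal{F}_n^{\mathrm{pr}} \subseteq \mathcal{F}_n$ be the primitive based closed paths, respectively the primitive conjugacy classes, of length $n$, with $\lambda_n^{\mathrm{pr}}$ and $\mu_n^{\mathrm{pr}}$ the uniform measures on them; these are precisely the conditional measures $\lambda_n(\,\cdot \mid \mathcal{C}_n^{\mathrm{pr}})$ and $\mu_n(\,\cdot \mid \mathcal{F}_n^{\mathrm{pr}})$. By the previous paragraph $\lambda_n(\mathcal{C}_n^{\mathrm{pr}}) \to 1$ and $\mu_n(\mathcal{F}_n^{\mathrm{pr}}) \to 1$, and since for atomic measures $\Vert \mathbb{P} - \mathbb{P}(\,\cdot \mid B)\Vert_{TV} \to 0$ whenever $\mathbb{P}(B) \to 1$ (an elementary computation, cf. Lemma~\ref{L:basic}), we get $\Vert \lambda_n - \lambda_n^{\mathrm{pr}}\Vert_{TV} \to 0$ and $\Vert \mu_n - \mu_n^{\mathrm{pr}}\Vert_{TV} \to 0$. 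As pushing forward is a contraction for total variation distance, it suffices to show $\Vert (p_n)_\star \lambda_n^{\mathrm{pr}} - \mu_n^{\mathrm{pr}}\Vert_{TV} \to 0$. But by Lemma~\ref{L:code}(1)--(2), up to the fixed finite set of exceptions every $\gamma \in \mathcal{F}_n^{\mathrm{pr}}$ is the image of a unique primitive cycle of length $n$, hence of exactly $n$ elements of $\mathcal{C}_n^{\mathrm{pr}}$; for such $\gamma$, since $|\mathcal{C}_n^{\mathrm{pr}}| = nP_n$, this gives $(p_n)_\star \lambda_n^{\mathrm{pr}}(\gamma) = 1/P_n$, which differs from $\mu_n^{\mathrm{pr}}(\gamma) = 1/|\mathcal{F}_n^{\mathrm{pr}}| = (1+o(1))/P_n$ by $o(1/P_n)$ uniformly in $\gamma$. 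Summing over the $\sim P_n$ primitive classes, and absorbing the $O(1)$ exceptional classes (which carry mass $O(1/P_n)$ in each measure), gives $\Vert (p_n)_\star \lambda_n^{\mathrm{pr}} - \mu_n^{\mathrm{pr}}\Vert_{TV} = o(1)$, and the triangle inequality completes the proof.

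I do not expect a genuine obstacle: the content is entirely bookkeeping, and the two things to keep straight are (a) the factor-of-$n$ discrepancy between based closed paths in $\mathcal{C}_n$ and unbased cycles, which behaves cleanly only on the primitive part, and (b) routing the finitely many exceptions of Lemma~\ref{L:code} into the error terms. The one quantitative input that makes all stray contributions exponentially negligible is the Perron--Frobenius asymptotic $\mathrm{tr}(M^n) \sim \lambda^n$ with $\lambda > 1$. Alternatively one can bypass the reduction to primitives and estimate $\Vert (p_n)_\star \lambda_n - \mu_n\Vert_{TV} = \tfrac12 \sum_\gamma \bigl|\, |p_n^{-1}(\gamma)|/|\mathcal{C}_n| - 1/|\mathcal{F}_n| \,\bigr|$ directly, splitting the sum according to whether $\gamma$ is primitive.
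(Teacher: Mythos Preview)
Your argument is correct and is essentially the same counting argument as in the paper: both rest on the Perron--Frobenius asymptotic $\mathrm{tr}(M^n)\sim\lambda^n$, the identity $\mathrm{tr}(M^n)=\sum_{d\mid n} d\,P_d$, the resulting bound $O(n\lambda^{n/2})$ on non-primitive contributions, and the fact that $p_n$ is exactly $n$-to-one over primitive classes. The only cosmetic difference is that you first pass to the conditional measures on primitives and then compare, whereas the paper estimates $|\mu_n(A)-\lambda_n(p_n^{-1}(A))|$ directly for arbitrary $A$ (the alternative you mention at the end); also note your $O(\lambda^{n/2})$ bounds should strictly be $O(n\lambda^{n/2})$, matching the paper, though this does not affect the conclusion.
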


\begin{proof}
Let $\Gamma_n$ denote 
the set of primitive cycles of length $n$ (without remembering the basepoint). Note that $\textup{Tr }M^n = \lambda^n + \sum_{i =1}^k \lambda_i^n$, where $|\lambda_i| < \lambda$.
Hence $\lambda^n \leq \textup{Tr }M^n \leq c \lambda^n$ where $c$ only depends on $M$, and $\lim_{n \to \infty}\frac{\textup{Tr }M^n}{\lambda^n} = 1$. 
Moreover, by definition $\sum_{d | n} d (\# \Gamma_d)  =\textup{Tr }M^n$, so $n (\# \Gamma_n) \leq \textup{Tr }M^n \leq c \lambda^n$. 
Hence, the set of non-primitive closed paths of length $n$ has cardinality
$$\textup{Tr }M^n - n (\# \Gamma_n) = \sum_{d|n, d \neq n} d (\#\Gamma_d) \leq \frac{c n}{2} \lambda^{n/2}$$
and so
$$\lim_{n \to \infty}\frac{n (\# \Gamma_n)}{\lambda^n} = 1.$$
Then for any set $A \subseteq \mathcal{F}_n$ and any $n \geq 1$ sufficiently large 
(using the bijection provided by Lemma \ref{L:code} with $n$ large enough to avoid the exceptions)
$$\mu_n(A) =  \frac{\sum_{d | n} \#(A \cap \Gamma_d)}{\sum_{d | n} (\# \Gamma_d)} = \frac{\#(A \cap \Gamma_n) + O(n \lambda^{n/2})}{(\# \Gamma_n) +  O(n \lambda^{n/2})}$$
and, since the map $p_n$ is exactly $d$-to-$1$ on the preimage of $\Gamma_d$,  
$$\lambda_n(p_n^{-1}(A))= \frac{\sum_{d | n}  d \#(A \cap \Gamma_d)}{\sum_{d | n} d (\# \Gamma_d)} = \frac{n \#(A \cap \Gamma_n) + O(n \lambda^{n/2})}{n (\# \Gamma_n) + O(n \lambda^{n/2})}.$$ 
Hence
$$\left| \mu_n(A) - \lambda_n(p_n^{-1}(A)) \right| = O\left(\frac{n \lambda^{n/2}}{(\# \Gamma_n)}\right) \to 0, $$
as $n\to \infty$. This completes the proof.
\end{proof}

\subsection*{Markov chains} 

Let $\Gamma$ be a directed graph with vertex set $V = V(\Gamma)$, edge set $E = E(\Gamma)$, and aperiodic transition matrix $M$. 
For $n \ge0$, we let $\Omega^n$ denote the set of paths of length $n$ in $\Gamma$ starting at any vertex, and $\Omega^\star = \bigcup_{n \geq 1} \Omega^n$ 
the set of all paths of finite length. Also denote by $\Omega$ the set of all \emph{infinite} directed paths. If we wish to focus on the subset of paths that start at the vertex $v \in V$, then we use $v$ as a subscript as in $\Omega_v$ or $\Omega^n_v$.
We will associate to $\Gamma$ a probability on each edge and a probability on each vertex so that the corresponding measure on the space of infinite paths in $\Gamma$ is shift-invariant. 
(The reader is directed to \cite[Section 8]{Walters} for additional details of this standard construction.)
Recall that if $x = (x_i)_{i\ge 0} \in \Omega$ is a directed path in $\Gamma$, then the image of $x$ under the shift $T \colon \Omega \to \Omega$ is $Tx = (x_i)_{i\ge 1}$. The assumption that $M$ is aperiodic translates to the fact that the shift is topologically mixing.
For $x\in \Omega^n$, we continue to use the notation $T^ix$ to denote the image of $x$ under the shift for $i\le n$.

Recall that $\lambda >1$ is the unique (simple) eigenvalue of $M$ of maximum modulus.
Fix a right eigenvector $\textbf{v}$ for $M$ of eigenvalue $\lambda$ and a left eigenvector $\textbf{u}$ of the same eigenvalue, normalized so that 
$\textbf{u}^T \textbf{v} = 1$. 
Let us now define the measure $(\pi_i)_{i \in V}$ on the set of vertices of $\Gamma$ where $\pi_i = u_i v_i$, 
and for each edge in $\Gamma$ from $i$ to $j$ let us define the probability $q_{ij} = \frac{m_{ij} v_j}{\lambda v_i}$. 
This defines a Markov chain on $\Gamma$, where $\pi_i$ is the probability of starting at vertex $v_i$, and $q_{ij}$ 
is the probability of going from $v_i$ to $v_j$. 

This construction defines a shift invariant measure $\nu$ (the so-called Parry measure) on the set of infinite paths $\Omega$. Namely, let 
$C(i_0, i_1, \dots, i_k)$ denote the set of infinite paths which start with the path $v_{i_0} \to v_{i_1} \to \dots \to v_{i_k}$. 
This is a cylinder set 
of $\Omega$ and we set its measure to be 
$$\nu(C(i_0, i_1, \dots, i_k)) = \pi_{i_0} q_{i_0 i_1} q_{i_1 i_2} \dots q_{i_{k-1} i_k},$$
and this determines a shift invariant measure $\nu$ on $\Omega$.
(In fact, this defines the measure of maximal entropy for the shift $T \colon \Omega \to \Omega$ \cite[Theorem 8.10]{Walters}.)
Now for each $n$, let $\nu_n$ be the pushforward of $\nu$ with respect to the map $\Omega \to \Omega^n$ which takes an infinite path to its 
 prefix of length $n$.
The measure $\nu_n$ is the distribution of the $n^{th}$ step of the Markov chain whose initial distribution is $(\pi_i)_{i \in V}$, and $\nu_n$ is supported on the set of paths of length $n$. 

As before $\ev : E(\Gamma) \to G$ is the evaluation map which associates to each edge its label in $S \cup S^{-1} \subset G$. By concatenation, the map extends to a map $\ev : \Omega^\star \to G$
from the set of all finite paths to $G$. Hence, if $x \in \Omega^n$ is a path of length $n$ given as a sequence of vertices $(x_i)_{i=0}^{n}$ associated to the edge path $e_1, \ldots, e_n$, then $\ev(x) = \ev(e_1) \cdots \ev(e_n)$ in $G$. In particular, if $x$ is a single vertex (i.e. a path of length $0$) then $\ev(x) = 1$.


\subsection{The Central Limit Theorem for H\"older observables}
Here we briefly recall the classical CLT from Thermodynamic Formalism as we will need it. We closely follow Bowen \cite[Chapter 1]{Bowen}.

Let $T \colon \Omega \to \Omega$ be a topologically mixing shift of finite type and $\nu$ a shift-invariant Gibbs measure on $\Omega$. (For us, $\nu$ will always be the measure of maximal entropy  on the Markov chain $\Omega$, defined as above.) We have a metric $d_\Omega$ on the space $\Omega^\star \cup \Omega$ 
of all directed paths  defined by $d_\Omega(x,y) = 2^{-K}$ where $K$ is the largest nonnegative integer such that $x_i = y_i$ for $i < K$. 
We note that with this metric, $\Omega$ is the set of limit points of the discrete set $\Omega^\star$. Topologically, $\Omega$ is a Cantor set.

A function $f \colon \Omega \to \mathbb{R}$ is \emph{H\"older continuous} 
if for all  $x,y \in \Omega$, $|f(x)-f(y)| \le Cd_\Omega(x,y)^\epsilon$ for some $C, \epsilon >0$. 

The following result is a combination of Theorem 1.27 of \cite{Bowen} and the remark that follows it. We will use the notation $N_{a, b} :=\frac{1}{\sqrt{2 \pi}} \int_a^b e^{-\frac{x^2}{2}} \ dx$.

\begin{theorem} \label{T:CLT}
Suppose that $f \colon \Omega \to \mathbb{R}$ is H\"older and that there does not exist a function $u \colon \Omega \to \mathbb{R}$ such that
$f  = u - u \circ T + \int f d\nu$. 
Then there is a constant $\sigma>0$ such that for any $a<b$,
$$\nu \left( x \in \Omega \ : \ \frac{\sum_{i=0}^{n-1}f(T^ix) - n \int f d\nu }{\sigma \sqrt{n}}  \in [a, b] \right) \to N_{a, b}.$$
\end{theorem}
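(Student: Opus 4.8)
The plan is to prove Theorem~\ref{T:CLT} by the classical thermodynamic-formalism argument --- the Nagaev--Guivarc'h spectral perturbation of the Ruelle transfer operator, which is essentially the content of \cite[Chapter 1]{Bowen} (see also \cite{PS, Ca}). First I would normalize: replacing $f$ by $f - \int f\, d\nu$, we may assume $\int f\, d\nu = 0$, so that the hypothesis becomes simply that $f$ is not of the form $u - u\circ T$, and the goal is to show that $S_n f := \sum_{i=0}^{n-1} f\circ T^i$, divided by $\sqrt n$, converges in law to a centered Gaussian of some variance $\sigma^2 > 0$. Since $\nu$ is the measure of maximal entropy, the right operator is the normalized transfer operator $(Lh)(x) = \frac{1}{\lambda}\sum_{Ty = x} h(y)$, which satisfies $L\mathbf{1} = \mathbf{1}$ and $L^*\nu = \nu$; the one substantial input I would quote from \cite{Bowen} is the Ruelle--Perron--Frobenius theorem, which gives that $L$ acts on the space $C^\epsilon(\Omega)$ of $\epsilon$-H\"older functions (for the H\"older exponent of $f$) with a spectral gap: the eigenvalue $1$ is simple, and the rest of the spectrum lies in a disk of radius $\theta < 1$.

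Next I would introduce, for $t$ near $0$ in $\mathbb{R}$, the perturbed operators $L_t h := L(e^{itf}h)$ on $C^\epsilon(\Omega)$. Since $f$ is H\"older, $t \mapsto L_t$ is analytic, so standard perturbation theory gives, for $|t|$ small, a simple leading eigenvalue $\lambda(t)$ analytic in $t$ with $\lambda(0) = 1$, an analytic eigenprojection $\Pi_t$, and a complementary part of spectral radius bounded away from $1$ uniformly in $t$, hence a decomposition $L_t^n = \lambda(t)^n \Pi_t + R_t^n$ with $\Vert R_t^n\Vert \le C(\theta')^n$, $\theta' < 1$. The bridge to probability is the cocycle identity $L_t^n h = L^n(e^{it S_n f} h)$, proved by induction from $L((\psi\circ T)\,g) = \psi\, Lg$, which together with $L^*\nu = \nu$ yields $\int e^{it S_n f}\, d\nu = \int L_t^n \mathbf{1}\, d\nu$. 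I would then compute $\lambda'(0) = i\int f\, d\nu = 0$ and $\lambda''(0) = -\sigma^2$, where $\sigma^2 = \int f^2\, d\nu + 2\sum_{k\ge 1}\int f\cdot(f\circ T^k)\, d\nu$ is the asymptotic variance, so $\lambda(t) = 1 - \tfrac{\sigma^2}{2}t^2 + o(t^2)$. Substituting $t \mapsto t/(\sigma\sqrt n)$ into the spectral decomposition: the remainder is $O((\theta')^n)$, the projection factor $\int \Pi_{t/(\sigma\sqrt n)}\mathbf{1}\, d\nu \to 1$, and $\lambda(t/(\sigma\sqrt n))^n \to e^{-t^2/2}$, so the characteristic function of $S_n f/(\sigma\sqrt n)$ converges pointwise to $e^{-t^2/2}$; L\'evy's continuity theorem then gives the asserted convergence in distribution.

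The hard part will be the non-degeneracy $\sigma^2 > 0$, without which the scheme collapses (indeed $\lambda(t)^n$ would fail to converge). If $\sigma^2 = 0$ then $\Vert S_n f\Vert_{L^2(\nu)}^2 = o(n)$, and a standard Fej\'er-averaging argument shows this forces $f = u - u\circ T$ for some $u \in L^2(\nu)$; I would then invoke a Liv\v{s}ic-type regularity theorem for mixing shifts of finite type to upgrade such a measurable transfer function to a H\"older one, contradicting the hypothesis. An equivalent route packaging the same spectral input is Gordin's martingale method: put $\psi := -\sum_{k\ge1} L^k f$, convergent in $C^\epsilon(\Omega)$ by the spectral gap since $\int f\, d\nu = 0$, and write $f = g + \psi - \psi\circ T$ with $g := f - \psi + \psi\circ T$ satisfying $Lg = 0$; then $g \not\equiv 0$ precisely because $f$ is not a H\"older coboundary, the array $(g\circ T^i)$ is a reverse martingale difference sequence for $(T,\nu)$, the martingale CLT applies with variance $\sigma^2 = \int g^2\, d\nu > 0$, and $S_n f$ differs from $\sum_{i=0}^{n-1} g\circ T^i$ by the telescoping term $\psi - \psi\circ T^n$, which is bounded and hence negligible after dividing by $\sqrt n$. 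Either way, the genuinely substantial analytic ingredient --- the spectral gap for $L$ on H\"older functions --- is classical and may simply be cited from \cite{Bowen}, so the remaining new work is the perturbative bookkeeping and the non-degeneracy argument.
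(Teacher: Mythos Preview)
Your sketch is a correct outline of the classical Nagaev--Guivarc'h/thermodynamic-formalism proof, but note that the paper does not prove Theorem~\ref{T:CLT} at all: it is simply quoted as ``a combination of Theorem~1.27 of \cite{Bowen} and the remark that follows it,'' and used as a black box. So there is no ``paper's own proof'' to compare against; you are supplying the argument that the cited reference contains.

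That said, your plan is essentially the proof one finds in \cite{Bowen} (and in expositions such as \cite{PS, CP}): spectral gap for the transfer operator on H\"older functions, analytic perturbation $L_t = L(e^{itf}\,\cdot\,)$, the cocycle identity $L_t^n\mathbf{1} = L^n(e^{itS_nf})$, second-order expansion of the leading eigenvalue, and L\'evy's continuity theorem. The non-degeneracy step ($\sigma^2 = 0 \Rightarrow f$ is a coboundary) is also the standard one; your remark about Liv\v{s}ic regularity is appropriate if one reads the hypothesis as ruling out a genuine pointwise solution $u \colon \Omega \to \mathbb{R}$ rather than merely an $L^2$ one. The alternative Gordin decomposition you mention is an equally valid packaging of the same spectral-gap input. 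In short: nothing is wrong, but in the context of this paper the theorem is a citation, not a result to be re-proved.
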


\subsection{Displacement and the CLT}
The main result of this section is the following central limit theorem for displacement along the Markov chain. 
First, fix a basepoint $z\in \mathbb{H}^2$.

\begin{theorem} \label{T:disp}
There exist constants $L > 0$ and $\sigma > 0$ such that for any $a, b \in \mathbb{R}$ with $a < b$ we have 
$$\nu_n \left( x \in \Omega^n \ : \ \frac{d(\ev(x)z, z) - n L }{\sigma \sqrt{n}}  \in [a, b] \right) \to N_{a, b}.$$
\end{theorem}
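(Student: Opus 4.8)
The plan is to deduce Theorem \ref{T:disp} from the abstract CLT for H\"older observables (Theorem \ref{T:CLT}) applied to a suitable observable on the shift space $\Omega$, and then transfer the resulting limit law from the Parry measure $\nu_n$ (equivalently, the $n^{\mathrm{th}}$-step distribution of the Markov chain) to the statement as written. The natural observable is $f(x) := d(\ev(x_0 x_1)z, z) - d(\ev(x_0)z,z) = $ ``the displacement added by reading the first edge'', more precisely a function of the first two coordinates of $x$ that measures how much $d(\cdot\, z, z)$ grows when one more generator is appended; telescoping gives $\sum_{i=0}^{n-1} f(T^i x) = d(\ev(x)z,z) + O(1)$ up to boundary terms depending only on $x_0$ and $x_{n-1}$. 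The first step is therefore to write down $f$ carefully, check it is H\"older (indeed it depends on only finitely many coordinates, or at worst is locally constant after one refinement, so this is immediate), and verify that the telescoped sum agrees with $d(\ev(x)z, z)$ up to a uniformly bounded error. Setting $L := \int f \, d\nu$, which is positive since $G$ acts as a convex cocompact group and orbit points escape linearly in word length, this rewrites the displacement as $nL + \sigma\sqrt n \cdot(\text{Gaussian}) + O(1)$.

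The second step is to rule out the coboundary degeneracy: one must show there is no $u$ with $f = u - u\circ T + L$. If such a $u$ existed, then $d(\ev(x)z, z) = nL + O(1)$ for $\nu$-almost every periodic orbit, i.e. translation lengths of all (but finitely many) conjugacy classes of word length $n$ would be within a bounded distance of $nL$. This contradicts the known existence of conjugacy classes whose geodesic length deviates from the ``average'' $nL$ by an unbounded amount --- concretely, one can exhibit two periodic words of the same length realizing different translation-length-to-word-length ratios, or invoke that the Manhattan-type curve / the comparison between word metric and hyperbolic displacement is not a single linear function along all geodesics. This non-degeneracy is really the crux, and I expect it to be the main obstacle: it should follow from results of \cite{GTT} on Gromov products together with a direct construction of geodesics with distinct asymptotic displacement rates, but it requires genuine geometric input rather than soft arguments.

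The third step is the measure-theoretic transfer. Theorem \ref{T:CLT} gives the CLT for $\nu$ on infinite paths with the Birkhoff sum $\sum_{i=0}^{n-1} f(T^i x)$; I need the statement for $\nu_n$ on $\Omega^n$ with the function $d(\ev(x)z, z)$. Since $\nu_n$ is by construction the pushforward of $\nu$ under truncation to the length-$n$ prefix, and $d(\ev(x)z,z)$ depends only on that prefix, the two expressions $\sum_{i=0}^{n-1}f(T^i x)$ and $d(\ev(x)z,z)$ differ by a function bounded by a constant independent of $n$ (the boundary terms above). Dividing by $\sigma\sqrt n$, this bounded discrepancy tends to $0$ in probability, so Lemma \ref{L:sum} (with $F(x) = \frac{\sum f(T^i x) - nL}{\sigma\sqrt n}$, $G(x)$ the rescaled bounded error, and $c_n \equiv 1$) upgrades the CLT from the Birkhoff sum to $d(\ev(x)z,z)$, giving exactly the claimed convergence to $N_{a,b}$.

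In summary: (i) define the telescoping displacement observable $f$ and check H\"older continuity and that its Birkhoff sums equal $d(\ev(x)z,z) + O(1)$; (ii) set $L = \int f\, d\nu > 0$ and prove $f$ is not cohomologous to a constant, using geometric estimates from \cite{GTT} to produce geodesics with distinct displacement rates --- this is the hard step; (iii) apply Theorem \ref{T:CLT} to get the CLT for $\sum_{i=0}^{n-1} f(T^i x)$ under $\nu$, and use that $\nu_n$ is the prefix-pushforward of $\nu$ together with Lemma \ref{L:sum} to absorb the bounded error and conclude.
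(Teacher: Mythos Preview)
Your overall strategy---define a telescoping observable, rule out the coboundary degeneracy, apply Theorem \ref{T:CLT}, then absorb a bounded error via Lemma \ref{L:sum} and the prefix pushforward---matches the paper's. However, step (i) as you have written it contains a genuine error, and fixing it is where the actual work lies.

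The observable $f(x)=d(\ev(x_0x_1)z,z)-d(\ev(x_0)z,z)$ you write down equals $d(sz,z)$, where $s$ labels the first edge of $x$, so its Birkhoff sum is $\sum_{i=1}^{n}d(s_iz,z)$. This does \emph{not} differ from $d(\ev(x)z,z)$ by $O(1)$: since the orbit map $G\to\HH^2$ is only a quasi-isometry, the gap is typically of order $n$, and no ``boundary terms depending only on $x_0$ and $x_{n-1}$'' can repair it. More generally, there is no function of finitely many coordinates whose Birkhoff sums equal $d(\ev(x)z,z)+O(1)$: the increment $d(z,\ev(x^{i+1})z)-d(z,\ev(x^{i})z)$ depends on the whole prefix $x^{i+1}$, which the shift has already erased. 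The observable that \emph{does} telescope exactly is the paper's
\[
DF(x)=d(z,\ev(x)z)-d(z,\ev(Tx)z),
\]
which peels off the \emph{first} letter from a suffix; summing gives $\sum_{i=0}^{n-1}DF(T^ix)=d(z,\ev(x)z)$ for finite $x$. But then $DF(T^ix)$ depends on the entire tail $T^ix$, so $DF$ is not locally constant and H\"older continuity is \emph{not} immediate. This is precisely the content of Lemma \ref{L:Holder} (Pollicott--Sharp), which uses the $\mathrm{CAT}(-1)$ geometry to prove $|D_sF(g)-D_sF(h)|\le C\alpha^{-(g,h)}$. One then extends $DF$ from $\Omega^\star$ to $\Omega$ by uniform continuity, and a separate argument (Lemma \ref{lem:almost_displacement}, using that $i\mapsto \ev(x^i)z$ is a uniform quasigeodesic) shows that the Birkhoff sum $F_n(x)$ on an infinite path differs from the displacement $d(z,\ev(x^n)z)$ by a uniform constant.

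For step (ii) your intuition is right but you have over-estimated the difficulty and pointed to the wrong source. The paper does not use \cite{GTT} here. If $DF=u-u\circ T+L$, then on any periodic point $x$ of period $n$ one gets $F_n(x)=nL$; a short computation (Lemma \ref{lem:sig_pos}) identifies $F_n(x)$ with the \emph{translation length} $\tau(\ev(x^n))$, so $\tau(g)=L\Vert g\Vert$ for all but finitely many conjugacy classes. This contradicts the existence of two closed geodesics on $\Sigma$ with incommensurable lengths \cite{Dal'bo}, and that is the only geometric input needed. Step (iii) is as you describe and matches the paper.
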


The proof of Theorem \ref{T:disp} requires the following setup, which approximately follows the discussion in Calegari \cite[Section 3.7]{Ca}.
 For $g \in G$ and generator $s\in S$, define $D_s F(g) = d(z,gz) - d(z,sgz)$.
For a \emph{finite} path $x \in \Omega^\star$, let $DF(x) = D_{s^{-1}}F(\e(x))$, where $s$ labels the first edge of $x$. This defines a function $DF \colon \Omega^\star \to \mathbb{R}$ on the set of all finite paths such that
\begin{align*}
DF(x) = d(z,\e(x)z) - d(z,\e(T x)z).
\end{align*}
In particular, if $x \in \Omega^n$, then we note that
\begin{align} \label{sum:1}
\sum_{i=0}^{n-1}DF(T^ix) = d(z,\e(x)z).
\end{align}

To apply the central limit theorem (Theorem \ref{T:CLT}), one needs to verify the H\"older continuity property of the observable. For this, we need the following: 


\begin{lemma}[\cite{PS}, Proposition 1; \cite{PS2}, Lemma 1 and Proposition 3] \label{L:Holder}
Let $X$ be a $\mathrm{CAT}(-k)$ metric space, with $k > 0$. Then there exist constants $C > 0$ and $\alpha > 1$ such that for any $h, g \in G$, any $s \in S$
\begin{align}\label{e:holder}
|DF_s(g) - DF_s(h)| \le C \alpha^{- (g, h)}.
\end{align}
\end{lemma}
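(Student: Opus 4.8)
The plan is to reduce the claimed inequality to a comparison of distances between four points in the $\mathrm{CAT}(-k)$ space $X$ (here $X = \mathbb{H}^2$), and then exploit the exponential convergence of geodesics in negative curvature. First I would unwind the definition: $DF_s(g) = d(z, gz) - d(z, sgz)$, so
\begin{align*}
DF_s(g) - DF_s(h) = \big( d(z, gz) - d(z, hz) \big) - \big( d(z, sgz) - d(z, shz) \big).
\end{align*}
The key observation is that each of the two parenthesized differences is, up to the basepoint $z$, a \emph{horofunction}-type cocycle: $d(z, gz) - d(z, hz)$ depends mostly on how the geodesic rays $[z, gz]$ and $[z, hz]$ diverge, and the quantity $(g, h) = (gz, hz)_z$ measures exactly how long these rays stay close. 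The heart of the argument is that left-multiplication by $s$ moves the triple $(z, gz, hz)$ to $(z, sgz, shz)$ in a controlled way, and the \emph{difference} of the two divergence terms is governed by how the Gromov product changes, which in a $\mathrm{CAT}(-k)$ space is exponentially stable.

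The key steps, in order: (1) Cite or reprove the standard fact in $\mathrm{CAT}(-k)$ spaces that if $(x, y)_z \ge t$ then the geodesics $[z,x]$ and $[z,y]$ remain within distance $O(e^{-kt})$ of each other up to time roughly $t$; this is the negative-curvature refinement of the usual $\delta$-hyperbolic statement and is exactly the content of the cited results of Pollicott--Sharp. (2) Translate this into an estimate on the Busemann-type differences: show $\big| (d(z,gz) - d(z,hz)) - (d(w, gz) - d(w, hz)) \big| \le C' e^{-k (g,h)}$ for any point $w$ lying on the common initial segment of $[z, gz]$ and $[z,hz]$ up to distance close to $(g,h)$ — i.e., the cocycle $g \mapsto d(z,gz) - d(z,hz)$ is almost independent of where along the shared geodesic segment you evaluate it. (3) Apply this with $w$ the point at parameter $(g,h) - O(1)$ along that shared segment, and note that both $[z, sgz]$ and $[z,shz]$, after the isometry $s$ is undone, relate $gz$ and $hz$ through the same far-away geometry; the bounded generating set $S$ means $s$ and $s^{-1}$ displace $z$ by a uniformly bounded amount, so only finitely many isometries are involved and all implicit constants can be taken uniform in $s \in S$. (4) Combine the two differences: the "near $z$" parts of $DF_s(g)$ and $DF_s(h)$ are compared via the bounded displacement $d(z, sz) \le \max_{s\in S} d(z,sz)$, while the "far from $z$" parts cancel up to $O(e^{-k(g,h)})$, yielding $|DF_s(g) - DF_s(h)| \le C \alpha^{-(g,h)}$ with $\alpha = e^{k} > 1$.

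The main obstacle I anticipate is step (2)--(3): making precise the claim that the quantity $d(z, gz) - d(z, sgz)$ "only sees" a bounded neighborhood of $z$ in a way that is genuinely uniform and has the right exponential error. The subtlety is that $g$ and $h$ need not be close as group elements — only $gz$ and $hz$ need have large Gromov product at $z$ — and one must track carefully that multiplying on the left by $s$ does not destroy the near-alignment of the relevant geodesic rays. The cleanest route is probably to express everything in terms of Busemann cocycles: $DF_s(g) = \beta_{gz}(z, sz)$ where $\beta_\xi$ is the Busemann function, rewrite $\beta_{gz}(z,sz)$ using that $\beta$ varies continuously (indeed Hölder in $\mathrm{CAT}(-k)$) in the point at infinity with respect to the visual metric based at $z$, and observe that $gz$ and $hz$ being within Gromov product $(g,h)$ means their (projections to the boundary, or themselves as points) are within visual distance $O(e^{-k(g,h)})$; since $z \mapsto \beta_\xi(z, sz)$ is Lipschitz in $\xi$ uniformly over the compact set $\{sz : s \in S\}$, the inequality follows. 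This is exactly the strategy in Pollicott--Sharp, so citing \cite{PS, PS2} for the boundary Hölder estimate and then assembling the pieces should suffice.
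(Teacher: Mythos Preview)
The paper gives no proof of this lemma at all: it is stated with attribution to Pollicott--Sharp \cite{PS,PS2} and used as a black box. Your sketch is therefore not competing with any argument in the paper itself; it is a reconstruction of the cited Pollicott--Sharp argument, and your Busemann-cocycle formulation in the final paragraph is exactly the route those papers take (Hölder continuity of $\xi \mapsto \beta_\xi(z,w)$ in the visual metric on a $\mathrm{CAT}(-k)$ space, uniformly over $w$ in the bounded set $\{s^{\pm1}z : s \in S\}$). One small correction: with the convention $\beta_\xi(x,y) = d(x,\xi) - d(y,\xi)$ you have $DF_s(g) = d(z,gz) - d(z,sgz) = d(z,gz) - d(s^{-1}z,gz) = \beta_{gz}(z,s^{-1}z)$, not $\beta_{gz}(z,sz)$; this is harmless since $S$ is finite and you only need uniformity over $s \in S \cup S^{-1}$. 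With that fix your outline is correct and matches the cited sources.
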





Since the evaluation map $\ev \colon \Omega^\star \to G$ is geodesic, then for any $x, y \in \Omega^\star$
$$\alpha^{-(\ev(x), \ev(y))} \leq d_\Omega(x, y)^\eta$$
where $\eta = \log_2(\alpha)$. Hence, the function $DF$ on $\Omega^\star$ is H\"older continuous and therefore has a unique continuous extension to the H\"older function $DF \colon \Omega \to \mathbb{R}$ on the space of all \emph{infinite} paths $\Omega$. 

For $x \in \Omega$, define
\begin{align*}
F_n(x) = DF(x) + DF(Tx) + \ldots +DF(T^{n-1}x).
\end{align*}
If we let $x^n \in \Omega^n$ denote the prefix of $x$ of length $n$, we have that 
\begin{align*}
F_n(x^n) = d(z,\e(x^n)z),
\end{align*}
by (\ref{sum:1}). Whereas for an infinite path $x \in \Omega$, we have 
\begin{align*}
F_n(x) = \lim_{k \to \infty} \big (d(z,\e(x^k)z) - d(z,\e(T^n x^k)z) \big).
\end{align*}
However, the following lemma bounds how far $F_n(x)$ can be from the displacement $F_n(x^n) = d(z,\ev(x^n)z)$.

\begin{lemma}\label{lem:almost_displacement}
The difference $| F_n(x) - F_n(x^n)|$ is uniformly bounded, independent of $x \in \Omega$ and $n \ge 1$.
\end{lemma}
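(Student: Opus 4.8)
The plan is to compare the two quantities by writing each as a sum of the H\"older observable $DF$ along the orbit of a suitable path and estimating the difference term-by-term using the exponential H\"older bound. Concretely, for a finite path $x^n \in \Omega^n$ we have $F_n(x^n) = \sum_{i=0}^{n-1} DF(T^i x^n) = d(z,\ev(x^n)z)$ by \eqref{sum:1}, while for the infinite path $x \in \Omega$ we have $F_n(x) = \sum_{i=0}^{n-1} DF(T^i x)$ using the continuous extension of $DF$ to $\Omega$. Thus
$$|F_n(x) - F_n(x^n)| \le \sum_{i=0}^{n-1} |DF(T^i x) - DF(T^i x^n)|.$$
The key point is that $T^i x$ and $T^i x^n$ agree on their first $n-i$ coordinates (both are obtained from $x$ by deleting the first $i$ letters; $T^i x^n$ is then truncated at length $n$, but $T^i x$ continues past that), so $d_\Omega(T^i x, T^i x^n) \le 2^{-(n-i)}$.

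Next I would invoke the H\"older continuity of $DF$ on $\Omega^\star \cup \Omega$: since $\ev$ is geodesic and by Lemma \ref{L:Holder}, there are constants $C>0$ and $\eta>0$ with $|DF(p) - DF(q)| \le C\, d_\Omega(p,q)^\eta$ for all $p, q$. Applying this with $p = T^i x$ and $q = T^i x^n$ gives $|DF(T^i x) - DF(T^i x^n)| \le C\, 2^{-\eta(n-i)}$. Summing over $i = 0, \ldots, n-1$ and reindexing by $j = n - i$,
$$|F_n(x) - F_n(x^n)| \le C \sum_{j=1}^{n} 2^{-\eta j} \le \frac{C}{1 - 2^{-\eta}},$$
which is a bound independent of both $x$ and $n$, as required.

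The only subtlety worth spelling out carefully is the geometric claim that $T^i x$ and $T^i x^n$ share a long common prefix, which is where the specific structure of $x^n$ as the length-$n$ prefix of $x$ enters: after applying $T^i$ we have removed $i$ of the initial vertices, and the truncation of $x^n$ leaves $n - i$ vertices, all of which coincide with the corresponding vertices of $T^i x$. I expect this bookkeeping with the metric $d_\Omega$ and the shift to be the main (though modest) obstacle; once it is in place, the rest is a geometric series. One should also note that $DF$ is genuinely defined and H\"older on all of $\Omega^\star \cup \Omega$ (not just on $\Omega$), which was established in the discussion preceding the lemma, so that evaluating $DF$ at the finite paths $T^i x^n$ is legitimate.
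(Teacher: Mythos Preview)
Your argument is correct but takes a genuinely different route from the paper. You bound $|F_n(x)-F_n(x^n)|$ term-by-term via the H\"older estimate for $DF$ on $\Omega^\star\cup\Omega$ and sum a geometric series; this is a purely symbolic-dynamics argument that would work verbatim for \emph{any} H\"older observable, not just one coming from displacement. The paper instead unwinds the definitions geometrically: it computes
\[
|F_n(x^n)-F_n(x)| \;=\; 2\lim_{k\to\infty}\bigl(z,\ev(x^k)z\bigr)_{\ev(x^n)z},
\]
and then observes that this Gromov product is uniformly bounded because $i\mapsto \ev(x^i)z$ is a uniform quasigeodesic in $\mathbb{H}^2$ (convex cocompactness plus stability of quasigeodesics). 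What the paper's approach buys is an explicit geometric identification of the error term as a Gromov product along the orbit quasigeodesic; what your approach buys is a cleaner, more portable proof that does not reopen the geometry already packaged into Lemma~\ref{L:Holder}. The one point you flag yourself---that the H\"older inequality must hold between a finite path $T^i x^n\in\Omega^\star$ and an infinite path $T^i x\in\Omega$---is indeed fine: the H\"older bound on $\Omega^\star$ passes to the closure $\Omega^\star\cup\Omega$ with the same constants by a standard limiting argument.
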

\begin{proof}
Write $x = y_1y_2 \ldots$, where the $y_i$ are edges of $\Gamma$, i.e. we represent $x$ as an edge path in $\Gamma$. Then $x^k = y_1y_2\ldots y_k$. Also, let $\e(y_i) = g_i$. Then 
\begin{align*}
|F_n(x^n) - F_n(x)| &=  \lim_{k\to \infty} \big( d(z,g_1\ldots g_n z) + d(z, g_{n+1}\ldots g_k z) -  d(z, g_1\ldots g_k z) \big) \\
& = \lim_{k\to \infty} \big( d(z,g_1\ldots g_n z) + d(g_1\ldots g_n z, g_{1}\ldots g_k z) -  d(z, g_1\ldots g_k z) \big) \\
&= 2 \lim_{k \to \infty} \big(z,\e(x^k)z \big)_{\e(x^n)z}.
\end{align*}

Since $G \curvearrowright \mathbb{H}^2$ is convex cocompact and the path $i \to \e(x^i) = g_1\ldots g_i$ is geodesic in $G$, the path $i \to \e(x^i) z$ is a uniform quasigeodesic. 
That is,  $i \to \e(x^i) z$ is a $(K,C)$--quasigeodesic in $X$ for $K\ge 1$ and $C\ge 0$ not depending on $x$. This (together with the stability of quasigeodesics in the hyperbolic space $\mathbb{H}^2$) immediately implies that the quantity $\big(z,\e(x^k)z \big)_{\e(x^n)z}$ is uniformly bounded for every $k \ge n$. This completes the proof.
\end{proof}

Finally, the following lemma is needed to establish positivity of $\sigma$ in Theorem \ref{T:disp}.
\begin{lemma} \label{lem:sig_pos}
There does not exist a function $u : \Omega \to \mathbb{R}$ and $L \in \mathbb{R}$ such that 
$$DF = u - u \circ T + L.$$
\end{lemma}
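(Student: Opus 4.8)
The plan is to argue by contradiction: suppose $DF = u - u\circ T + L$ for some $u \colon \Omega \to \mathbb{R}$ and $L \in \mathbb{R}$. Summing this coboundary relation along the first $n$ steps of an infinite path $x \in \Omega$ telescopes to
$$
F_n(x) = \sum_{i=0}^{n-1} DF(T^i x) = u(x) - u(T^n x) + nL.
$$
If we knew $u$ were continuous (hence bounded, since $\Omega$ is compact), this would say $F_n(x) = nL + O(1)$ uniformly in $x$. Combined with Lemma \ref{lem:almost_displacement}, which bounds $|F_n(x) - F_n(x^n)| = |F_n(x) - d(z, \ev(x^n)z)|$ uniformly, we would get $d(z, \ev(x^n)z) = nL + O(1)$ for \emph{every} path $x$ of every length $n$. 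The strategy is then to exhibit two families of paths whose evaluations have genuinely different translation rates, i.e. realizing $\liminf |\ev(x^n)z| / n$ strictly less than $\limsup$ over varying $x$, contradicting that all of them equal $L + o(1)$.

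To produce such paths, I would use that $\Gamma$ is strongly connected with aperiodic adjacency matrix, so there exist (at least two) distinct primitive cycles $c_1, c_2$ in $\Gamma$ based at a common vertex $v$; concatenating them with a fixed connecting path gives, for each $n$, admissible paths that are essentially $c_1^{k}$ or $c_2^{k}$. The evaluation map sends $c_j$ to a fixed group element $w_j \in G$ with $|w_j|$ equal to the combinatorial length $\ell_j$ of $c_j$ (paths in $\Gamma$ are geodesic in $G$), and $w_j$ is infinite order (its powers have linearly growing length, being geodesic). Hence $d(z, \ev(c_j^k)z) = d(z, w_j^k z) = k\,\tau(w_j) + O(1) = k\,\ell_j\cdot\frac{\tau(w_j)}{\ell_j} + O(1)$. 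The point is that the asymptotic displacement-per-combinatorial-length $\tau(w_j)/\ell_j$ need not be the same for $j=1$ and $j=2$: if it were equal to a common constant $L$ for \emph{all} cycles, one could derive from this that translation length is a fixed linear function of word length on all of $G$, which is false for a hyperbolic surface or free group (e.g. a generator and a long alternating word have different ratios of hyperbolic length to word length). So choosing $c_1, c_2$ with $\tau(w_1)/\ell_1 \ne \tau(w_2)/\ell_2$ and taking $k \to \infty$ contradicts $d(z,\ev(x^n)z) = nL + O(1)$.

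The remaining gap is the regularity of $u$: Lemma \ref{lem:sig_pos} as stated allows \emph{arbitrary} $u$, not just continuous ones, so the telescoping argument above does not immediately give a uniform bound on $u(x) - u(T^n x)$. To handle this I would restrict attention to a single periodic orbit: if $x$ is the periodic infinite path $\overline{c_j}$ (the cycle $c_j$ repeated forever, suitably based so it is shift-periodic with period $\ell_j$), then $T^{\ell_j} x = x$, so the coboundary relation summed over one period gives $F_{\ell_j}(x) = u(x) - u(x) + \ell_j L = \ell_j L$ \emph{exactly}, with no error term and no continuity needed. Iterating, $F_{k\ell_j}(x) = k\ell_j L$ for all $k$. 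On the other hand $F_{k\ell_j}(x)$ differs by a uniformly bounded amount (Lemma \ref{lem:almost_displacement}) from $d(z, \ev(x^{k\ell_j})z) = d(z, w_j^k z) = k\,\tau(w_j) + O(1)$. Therefore $\tau(w_j) = \ell_j L$ for $j = 1, 2$, forcing $\tau(w_1)/\ell_1 = \tau(w_2)/\ell_2 = L$, and choosing the $c_j$ as above with distinct ratios yields the contradiction. I expect the main obstacle to be verifying cleanly that $\Gamma$ admits two cycles $w_1, w_2$ with distinct hyperbolic-length-to-word-length ratios; this should follow because otherwise every closed geodesic would have hyperbolic length a fixed multiple of its conjugacy word length, which fails already for, say, a simple comparison between a short and a long closed geodesic, but one must phrase it so that the relevant elements are actually realized by cycles in $\Gamma$ (which they are, up to finitely many exceptions, by Lemma \ref{L:code}).
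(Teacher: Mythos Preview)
Your proposal is correct and follows essentially the same route as the paper: reduce to periodic orbits so that the telescoping sum gives $F_n(x)=nL$ exactly (no continuity of $u$ needed), invoke Lemma~\ref{lem:almost_displacement} to identify $F_n(x)$ with $\tau(\ev(x^n))$, and conclude $\tau([g])=L\Vert g\Vert$ for all but finitely many conjugacy classes via Lemma~\ref{L:code}. The only difference is in how the final contradiction is sourced: the paper simply cites the existence of two closed geodesics on $\Sigma$ with incommensurable lengths (Dal'bo \cite{Dal'bo}), which immediately rules out $\tau([g])\in L\mathbb{Z}$ for all but finitely many classes, whereas you argue more informally that the ratio $\tau/\Vert\cdot\Vert$ cannot be constant; your version is fine but would benefit from citing the same fact rather than the heuristic ``short vs.\ long geodesic'' comparison.
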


\begin{proof}
Suppose not. Then 
$$\sum_{i = 0}^{n-1} DF(T^i(x)) = u(x) - u(T^n(x)) + nL$$
Thus, if $x \in \Omega$ is a periodic point of period $n$ for $T$, then
$$F_n(x) = \sum_{i = 0}^{n-1} DF(T^i(x)) = n L .$$
Next, a direct computation 
shows that if $g = \ev(x^n)$, then
\[
\tau(g) = \tau(\ev(x^n)) = F_n(x) = nL.
\]
Indeed, as in \cite[Lemma 4.2]{HS}, since $T^nx = x$, $F_{nk}(x) = k F_n(x)$ and we see that $F_{nk}(x) = F_{nk}(x^{nk}) + O(1)$ (Lemma \ref{lem:almost_displacement}) implies 
\[
F_n(x) = \lim_{k \to \infty} \frac{1}{k}F_{nk}(x^{nk}) = \lim_{k\to \infty} \frac{1}{k}d(z,g^kz),
\]
which, by definition, equals the translation length $\tau(g)$.

Hence, we conclude using Lemma \ref{L:code} that $\tau([g]) = L \Vert g \Vert$ for all but finitely many conjugacy classes in $G$. This, however, contradicts the fact that there exists two closed geodesics on $\Sigma$ with incommensurable lengths (see, for example, \cite{Dal'bo}). 
\end{proof}

\begin{proof}[Proof of Theorem \ref{T:disp}]
Since $DF$ is a H\"older continuous function on a mixing shift of finite type, 
Theorem \ref{T:CLT} along with Lemma \ref{lem:sig_pos}
give 
\[
\nu \left( x \ : \ \frac{F_n(x) - n L }{\sigma \sqrt{n}}  \in [a, b] \right) \to N_{a, b},
\]
where as before $F_n(x) = DF(x) + DF(Tx) + \ldots +DF(T^{n-1}x)$. 
But by Lemma \ref{lem:almost_displacement}, the probability 
\[
\nu \left(x \ : \frac{|F_n(x^n) - F_n(x)|}{\sqrt{n}} \ge \epsilon \right) \to 0,
\]
as $n \to \infty$, for any $\epsilon > 0$. 
Hence, applying Lemma \ref{L:sum} gives the CLT for the displacement $F_n(x^n) = d(z,\e(x^n)z)$ for $x \in \Omega$ with respect to the measure $\nu$. Since the distribution of $x^n \in \Omega^n$ is $\nu_n$, this completes the proof of the theorem.
\end{proof}

\section{Convergence to the counting measure for closed paths}
Next we use the Theorem \ref{T:disp} (which is about the Markov chain) to study the distribution of closed paths. 
Given a path $x$ of length $n$, let $\widehat{x}$ denote the prefix of $x$ of length $n - \log n$. 

Recall that $\lambda_n$ is the uniform distribution on the set $\mathcal{C}_n$ of based closed paths of length $n$ in $\Gamma$. Since $\mathcal{C}_n \subset \Omega^n$, $\lambda_n$ defines a measure on $\Omega^n$ supported on $\mathcal{C}_n$.
Let $\lambda_{n, m}$ denote the distribution of the prefix of length $n - m$ of a uniformly chosen closed path of length $n$. Said differently, $\lambda_{n,m}$ is the distribution on paths of length $n-m$ obtained by pushing $\lambda_n$ forward under the prefix map.
In particular, if $x$ has law $\lambda_n$, then $\widehat{x}$ has law $\lambda_{n, \log n}$. We define $\nu_{n,m}$ in the same way, using $\nu_n$ in place of $\lambda_n$. By the Markov property, $\nu_{n,m} = \nu_{n-m}$.

\begin{proposition} \label{P:RN-der}
With notation as above, $\lambda_{n, m}$ is absolutely continuous with respect to $\nu_{n, m}$, and moreover 
$$\sup_{\gamma \in \Omega^{n-m}} \left| \frac{d \lambda_{n, m}}{d \nu_{n, m}}(\gamma) - 1 \right| \to 0$$
as $\min \{ m, n \} \to \infty$.
\end{proposition}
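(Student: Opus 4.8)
The plan is to compare the two measures cylinder set by cylinder set, using the explicit description of the Parry measure $\nu$ together with the combinatorial description of closed paths via traces of powers of $M$. Fix a path $\gamma \in \Omega^{n-m}$ of length $n-m$, say $\gamma$ runs from vertex $i$ to vertex $j$. The key observation is that $\lambda_{n,m}(\gamma)$ is, up to the exceptional set, the number of closed paths of length $n$ whose prefix of length $n-m$ equals $\gamma$, divided by the total number of closed paths of length $n$. A closed path of length $n$ extending $\gamma$ is exactly $\gamma$ followed by a path of length $m$ from $j$ back to $i$, so the numerator is $(M^m)_{ji}$, while the denominator is $\mathrm{Tr}\, M^n$. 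Thus
\[
\lambda_{n,m}(\gamma) = \frac{(M^m)_{ji}}{\mathrm{Tr}\, M^n}.
\]
On the other side, by the Markov property and the formula for $\nu$ on cylinders, $\nu_{n,m}(\gamma) = \nu_{n-m}(\gamma) = \pi_i\, q_{i i_1} q_{i_1 i_2} \cdots q_{i_{n-m-1} j}$, which telescopes to $u_i v_i \cdot \frac{(M^{n-m})_{\gamma} v_j}{\lambda^{n-m} v_i} = \frac{u_i v_j}{\lambda^{n-m}}$ where $(M^{n-m})_\gamma$ is the product of the entries of $M$ along $\gamma$ (each equal to $1$ since the labels come from an adjacency matrix), i.e. $\nu_{n-m}(\gamma) = u_i v_j \lambda^{-(n-m)}$.

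Next I would take the ratio and show it tends to $1$ uniformly. We get
\[
\frac{d\lambda_{n,m}}{d\nu_{n,m}}(\gamma) = \frac{(M^m)_{ji}\, \lambda^{n-m}}{u_i v_j\, \mathrm{Tr}\, M^n}.
\]
Now apply Perron--Frobenius in two places. First, as already recorded in the proof of Lemma \ref{L:push}, $\mathrm{Tr}\, M^n = \lambda^n(1 + o(1))$ as $n \to \infty$. Second, the standard spectral estimate for an aperiodic nonnegative matrix gives $(M^m)_{ji} = \lambda^m v_j u_i (1 + O(\theta^m))$ for some $\theta \in (0,1)$ depending only on $M$ (this is the convergence $\lambda^{-m} M^m \to \mathbf{v}\mathbf{u}^T$, with exponential rate governed by the spectral gap, and the error is uniform over the finitely many pairs $(i,j)$). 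Substituting,
\[
\frac{d\lambda_{n,m}}{d\nu_{n,m}}(\gamma) = \frac{\lambda^m v_j u_i (1 + O(\theta^m)) \cdot \lambda^{n-m}}{u_i v_j \cdot \lambda^n (1 + o(1))} = \frac{1 + O(\theta^m)}{1 + o(1)},
\]
and since the numerator bound $O(\theta^m)$ and the denominator estimate $o(1)$ are both independent of $\gamma$, taking $\sup_\gamma$ and then $\min\{m,n\} \to \infty$ gives the claim. Absolute continuity is automatic from the formulas, since $\nu_{n,m}(\gamma) > 0$ whenever $\gamma$ is an actual path in $\Gamma$ (all the relevant $u_i, v_j, (M^m)_{ji}$ are positive by aperiodicity once $m$ is large), and $\lambda_{n,m}$ is supported on such paths.

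The main obstacle is bookkeeping the exceptional conjugacy classes and, more importantly, the non-primitive closed paths: the count of based closed paths of length $n$ is literally $\mathrm{Tr}\, M^n$, which includes non-primitive ones, so the identification of $\lambda_{n,m}$ with $(M^m)_{ji}/\mathrm{Tr}\, M^n$ is exact at the level of the graph (no exceptions needed here, since $\lambda_n$ is defined as the uniform measure on \emph{all} based closed paths $\mathcal{C}_n$, not conjugacy classes). So in fact the only delicate point is making the $O(\theta^m)$ error in the local limit estimate for $(M^m)_{ji}$ genuinely uniform in $\gamma$ — but since $\gamma$ enters only through its endpoints $(i,j)$ and there are finitely many vertices, uniformity is free. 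I would also remark that one needs $m \le n$ so that $\lambda^{n-m}$ makes sense and the prefix has nonnegative length, which is guaranteed in the application since $m = \log n$.
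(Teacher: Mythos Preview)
Your proof is correct and follows essentially the same route as the paper: compute $\lambda_{n,m}(\gamma) = (M^m)_{ji}/\mathrm{Tr}\,M^n$ and $\nu_{n,m}(\gamma) = u_i v_j/\lambda^{n-m}$ for a path $\gamma$ from $i$ to $j$, then apply Perron--Frobenius to both factors of the ratio, noting that uniformity in $\gamma$ is free because only the endpoints matter and there are finitely many vertices. Your discussion of the exponential rate $O(\theta^m)$ is slightly more precise than what the paper states (it just uses the limit $\lambda^{-m}M^m \to \mathbf v\mathbf u^T$), and your digression on exceptional conjugacy classes is unnecessary here since, as you yourself observe, $\lambda_n$ is literally the uniform measure on all based closed paths, so the identity $\#\mathcal C_n = \mathrm{Tr}\,M^n$ is exact.
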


\begin{proof}
Given a path $\gamma = e_1 \cdot \ldots \cdot e_{n-m}$ with starting vertex $v_i$ and end vertex $v_j$ we have 
$$\lambda_{n, m}(\gamma) = \frac{\#\{\textup{paths of length }m\textup{ from }v_j\textup{ to }v_i\}}{\# \{\textup{closed paths of length }n \}}$$
$$ = \frac{ \textbf{e}_j^T M^m \textbf{e}_i }{\textup{Tr }M^n}$$
where $\textbf{e}_i$ is the $i^{th}$ basis vector.

Now, recall that we have fixed a right eigenvector $\textbf{v}$ for $M$ of eigenvalue $\lambda$, and a left eigenvector $\textbf{u}$ of the same eigenvalue, normalized so that 
$\textbf{u}^T \textbf{v} = 1$.
Since $M$ is irreducible and aperiodic, by the Perron-Frobenius Theorem we have 
$$\lim_{n \to \infty} \frac{M^n}{\lambda^n} = \textbf{v} \textbf{u}^T$$
and in particular 
$$\lim_{n \to \infty} \frac{\textbf{e}_i^T M^n \textbf{e}_j}{\lambda^n} = \textbf{e}_i^T  \textbf{v} \textbf{u}^T \textbf{e}_j = v_i u_j.$$

As before the measure $(\pi_i)$ where $\pi_i = u_i v_i$ is stationary for the Markov chain defined as $q_{ij} = \frac{m_{ij} v_j}{\lambda v_i}$, so 
we consider this Markov chain with the stationary measure as starting distribution.
Let $\nu_{n, m} = \nu_{n-m}$ be the pushforward of the Markov measure on the set of paths of length $n-m$. Then 
$$\nu_{n, m}(\gamma) = \frac{\pi_i v_j}{v_i \lambda^{n-m}} = \frac{u_i v_j}{\lambda^{n-m}}.$$
Hence 
$$\frac{d \lambda_{n, m}}{d \nu_{n, m}}(\gamma) = \frac{ \textbf{e}_j^T M^m \textbf{e}_i }{\textup{Tr }M^n} \frac{\lambda^{n-m}}{u_i v_j} = \frac{ \textbf{e}_j^T M^m \textbf{e}_i }{\lambda^m u_i v_j} \frac{\lambda^n}{\textup{Tr }M^n} \to 1,$$
as $\min\{m,n\} \to \infty$.
\end{proof}

We conclude this section by promoting the CLT for displacement from the Markov chain (Theorem \ref{T:disp}) to the counting measure $\lambda_n$.

\begin{theorem} \label{T:disp2}
For any $a, b \in \mathbb{R}$ with $a < b$ one has
$$\lambda_n \left( x \ : \ \frac{d(\ev(x)z, z) - n L }{\sigma \sqrt{n}}  \in [a, b] \right) \to N_{a, b}$$
as $n \to \infty$.
\end{theorem}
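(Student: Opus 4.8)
The plan is to transfer the central limit theorem from the Markov measure $\nu_n$ (Theorem~\ref{T:disp}) to the counting measure $\lambda_n$ on closed paths, using a truncation trick to replace $\lambda_n$ by a measure on shorter paths whose Radon--Nikodym derivative against the Markov measure is uniformly close to $1$ (Proposition~\ref{P:RN-der}). The first step is to pass from a closed path $x$ of length $n$ to its prefix $\widehat{x}$ of length $n - \log n$; since $|\ev(x)z - \ev(\widehat x)z|$ differs by at most $O(\log n)$ (the omitted suffix has word length $\log n$, and $\ev$ is geodesic so the group element moves the basepoint a bounded multiple of $\log n$), dividing by $\sqrt n$ this discrepancy tends to $0$. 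Hence by Lemma~\ref{L:sum} (with $G(x)$ the normalized discrepancy, $c_n = 1$) it suffices to prove the CLT for $d(\ev(\widehat x) z, z)$ under the law of $\widehat x$, which is $\lambda_{n, \log n}$.

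Next I would compare $\lambda_{n, \log n}$ with $\nu_{n, \log n} = \nu_{n - \log n}$. By Proposition~\ref{P:RN-der}, $\lambda_{n, \log n} \ll \nu_{n - \log n}$ and $\Vert \tfrac{d\lambda_{n,\log n}}{d\nu_{n-\log n}} - 1 \Vert_\infty \to 0$ as $\min\{\log n, n\} = \log n \to \infty$; by Lemma~\ref{L:TV} this gives $\Vert \lambda_{n,\log n} - \nu_{n-\log n}\Vert_{TV} \to 0$. Since a total-variation-negligible perturbation cannot change the limit of any sequence of probabilities of events, the CLT for $d(\ev(\cdot)z, z)$ under $\nu_{n - \log n}$ transfers to the same statement under $\lambda_{n, \log n}$. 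Now Theorem~\ref{T:disp} gives, with $N = n - \log n$,
$$\nu_{N}\left( y \ : \ \frac{d(\ev(y)z, z) - N L}{\sigma \sqrt N} \in [a,b]\right) \to N_{a,b};$$
replacing $N$ by $n - \log n$ and noting $\frac{(n - \log n)L}{\sigma\sqrt{n - \log n}} = \frac{nL}{\sigma\sqrt n} + o(\sqrt n \cdot \tfrac{1}{\sqrt n}) \cdot (\ldots)$ — more precisely $\sqrt{n - \log n} = \sqrt n (1 + o(1))$ and $(n - \log n)L = nL - L\log n$, so the centering/scaling for $\widehat x$ matches that for $x$ up to an $o(1)$ additive shift and a $(1 + o(1))$ multiplicative factor in the normalization — one more application of Lemma~\ref{L:sum} (absorbing the $O(\log n / \sqrt n) \to 0$ shift into $G$ and the $\sqrt{n-\log n}/\sqrt n \to 1$ factor into $c_n$) yields
$$\lambda_{n, \log n}\left(y \ : \ \frac{d(\ev(y)z,z) - nL}{\sigma\sqrt n} \in [a,b]\right) \to N_{a,b}.$$
Combining with the first step (the suffix contributes $O(\log n)$, again negligible after dividing by $\sqrt n$) completes the proof.

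The main obstacle, though it is a bookkeeping rather than a conceptual one, is making sure the two truncation errors — the $O(\log n)$ coming from chopping off the suffix of length $\log n$, and the mismatch between the centering constant $(n - \log n)L$ natural for $\nu_{n - \log n}$ and the target centering $nL$ — are handled cleanly and in the right order, so that every invocation of Lemma~\ref{L:sum} has its hypotheses genuinely satisfied (continuity of the Gaussian density $\rho$ is automatic, and $\mathbb{P}_n(|G| \ge \epsilon) \to 0$ must be checked for each $G$). One should also confirm that Proposition~\ref{P:RN-der} applies with $m = \log n$: indeed $\min\{m, n\} = \log n \to \infty$, so the hypothesis holds. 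No genuinely hard analysis is needed here — the real work was done in Theorem~\ref{T:disp} and Proposition~\ref{P:RN-der}.
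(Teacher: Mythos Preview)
Your proposal is correct and follows essentially the same route as the paper: truncate to the prefix $\widehat{x}$ of length $n-\log n$, compare $\lambda_{n,\log n}$ with $\nu_{n-\log n}$ via Proposition~\ref{P:RN-der} and Lemma~\ref{L:TV}, invoke Theorem~\ref{T:disp}, and absorb the $O(\log n)$ centering/scaling and suffix discrepancies through Lemma~\ref{L:sum}. The only difference is the order of presentation (you reduce from $x$ to $\widehat{x}$ first, whereas the paper builds up from $\nu_{n-\log n}$), which is immaterial.
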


\begin{proof}
Since $\log n/\sqrt{n} \to 0$, Theorem \ref{T:disp} (together with Lemma \ref{L:sum}) implies
$$\nu_{n-\log n} \left( x  \ : \ \frac{d(\ev(x)z, z) - n L }{\sigma \sqrt{n}}  \in [a, b] \right) \to N_{a, b}.$$
Now, since $\nu_{n - \log n} = \nu_{n, \log n}$ and by Proposition \ref{P:RN-der} and Lemma \ref{L:TV} we get 
$$\Vert \nu_{n, \log n} - \lambda_{n, \log n} \Vert_{TV} \to 0$$
hence
$$\lambda_{n, \log n} \left( x \ : \ \frac{d(\ev(x)z, z) - n L }{\sigma \sqrt{n}}  \in [a, b] \right) \to N_{a, b}.$$
Moreover, by the definition of $\widehat{x}$,
$$\lambda_n \left( x \ : \ \frac{d(\ev(\widehat{x})z, z) - n L }{\sigma \sqrt{n}}  \in [a, b] \right) = \lambda_{n, \log n} \left( x \ : \ \frac{d(\ev(x)z, z) - n L }{\sigma \sqrt{n}}  \in [a, b] \right). $$
Finally, note the since the orbit map $G \to \mathbb{H}^2$ is Lipschitz, we have 
$$|d(\ev(x)z, z) - d(\ev(\widehat{x}) z, z)| \leq C \log n$$ 
where $C$ is the Lipschitz constant. Then using Lemma \ref{L:sum}
\begin{align*}
 \lim_{n \to \infty} \lambda_n \left( x \ : \ \frac{d(\ev(x) z, z) - n L }{\sigma \sqrt{n}}  \in [a, b] \right) 
  &= \lim_{n \to \infty} \lambda_n \left( x \ : \ \frac{d(\ev(\widehat{x}) z, z) - n L }{\sigma \sqrt{n}}  \in [a, b] \right)  \\
  &= N_{a, b}
\end{align*}
which completes the proof.
\end{proof}



\section{The Gromov product}

The remaining step of our proof is to turn the statement about displacement (Theorem \ref{T:disp2}) into a statement about translation length. This is done by controlling the Gromov product.

We begin with the following easy computation. Recall that $\Omega^n$ is the set of all paths of length $n$ and $\mathcal{C}_n \subset \Omega^n$ is the subset of closed paths.

\begin{lemma} \label{L:Rn}
There is a constant $D \ge0$ such that 
\[
1 \le \frac {\# (\Omega^n)} {\# (\mathcal{C}_n)} \le D
\]
\end{lemma}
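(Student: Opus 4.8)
The statement to prove is the two-sided bound
\[
1 \le \frac{\#(\Omega^n)}{\#(\mathcal{C}_n)} \le D
\]
for a uniform constant $D$. Here $\Omega^n$ is all paths of length $n$ in $\Gamma$ (starting anywhere) and $\mathcal{C}_n$ is the based closed paths of length $n$. Both counts are expressible via the transition matrix $M$: $\#(\Omega^n) = \mathbf{1}^T M^n \mathbf{1} = \sum_{i,j} (M^n)_{ij}$ and $\#(\mathcal{C}_n) = \operatorname{Tr} M^n = \sum_i (M^n)_{ii}$.

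The plan is to use Perron–Frobenius asymptotics, exactly as in the proof of Lemma~\ref{L:push}. Since $M$ is aperiodic and irreducible, $M^n/\lambda^n \to \mathbf{v}\mathbf{u}^T$ with all entries of $\mathbf{v}\mathbf{u}^T$ strictly positive (here $\mathbf{u},\mathbf{v}$ are the normalized left/right Perron eigenvectors). Therefore
\[
\frac{\#(\Omega^n)}{\lambda^n} \to \Big(\textstyle\sum_i v_i\Big)\Big(\textstyle\sum_j u_j\Big) =: c_1 > 0,
\qquad
\frac{\#(\mathcal{C}_n)}{\lambda^n} = \frac{\operatorname{Tr} M^n}{\lambda^n} \to \sum_i u_i v_i = \mathbf{u}^T\mathbf{v} = 1.
\]
Hence the ratio $\#(\Omega^n)/\#(\mathcal{C}_n)$ converges to $c_1 \ge 1$ (it is $\ge 1$ since $\Omega^n \supseteq \mathcal{C}_n$ for every $n$, giving the lower bound for free and also $c_1\ge 1$ in the limit). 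A convergent sequence of positive reals is bounded, so there is a $D$ with $\#(\Omega^n)/\#(\mathcal{C}_n) \le D$ for all $n$; the lower bound $1 \le \#(\Omega^n)/\#(\mathcal{C}_n)$ is immediate from $\mathcal{C}_n \subseteq \Omega^n$.

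Concretely I would write it as follows. First note $\mathcal{C}_n \subseteq \Omega^n$, so the left inequality holds for all $n\ge1$. For the right inequality, write $\#(\Omega^n) = \mathbf{1}^T M^n \mathbf{1}$ and $\#(\mathcal{C}_n) = \operatorname{Tr} M^n$. From $\lim_n M^n/\lambda^n = \mathbf{v}\mathbf{u}^T$ we get $\lim_n \#(\Omega^n)/\lambda^n = (\mathbf{1}^T\mathbf{v})(\mathbf{u}^T\mathbf{1})$, a positive constant, and $\lim_n \operatorname{Tr}(M^n)/\lambda^n = \operatorname{Tr}(\mathbf{v}\mathbf{u}^T) = \mathbf{u}^T\mathbf{v} = 1$. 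Dividing, the sequence $\#(\Omega^n)/\#(\mathcal{C}_n)$ converges, hence is bounded above by some $D$, which can be taken $\ge1$.

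There is no real obstacle here; the only minor point to get right is that one must invoke the full matrix convergence $M^n/\lambda^n \to \mathbf{v}\mathbf{u}^T$ (valid because $M$ is aperiodic, not merely irreducible — periodicity would make $\operatorname{Tr} M^n$ vanish along a subsequence and break the argument), which is already used earlier in the paper, and that one should state $D$ exists because a convergent sequence is bounded rather than trying to produce an explicit value. One could alternatively bound $(M^n)_{ij} \le (M^{2n})_{ii}^{1/2}(M^{2n})_{jj}^{1/2}$-type inequalities to avoid limits entirely, but the Perron–Frobenius route is cleaner and matches the style of Lemma~\ref{L:push}.
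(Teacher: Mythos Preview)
Your proof is correct and follows essentially the same Perron--Frobenius approach as the paper: both arguments rest on $\#(\mathcal{C}_n)=\operatorname{Tr} M^n\sim\lambda^n$ and $\#(\Omega^n)\le D\lambda^n$, with the lower bound immediate from $\mathcal{C}_n\subseteq\Omega^n$. The paper is terser---writing $\operatorname{Tr} M^n=\lambda^n+\sum_i\lambda_i^n$ and $\#(\Omega^n)=\|M^n\|_1\le D\lambda^n$ directly---whereas you pass through the full matrix limit $M^n/\lambda^n\to\mathbf{v}\mathbf{u}^T$, but there is no substantive difference.
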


\begin{proof}
We know that $\# (\mathcal{C}_n) = \textup{Tr }M^n = \lambda^n + \sum_i {\lambda_i^n}$, where $\lambda$ is the Perron--Frobenius eigenvalue of $M$ and $|\lambda_i|< \lambda$.
Also, $\# (\Omega^n) = \Vert M^n \Vert_1 \le D \lambda^n$ for some $D \ge 1$.
\end{proof}

Next, we will see that the Gromov product of a random element and its inverse grows slowly in word length. This is our key estimate for relating displacement to translation length. 
\begin{proposition} \label{P:gr}
For any $\epsilon >0$, 
$$\lambda_n \big ( x \ : \ (\ev(x)z, \ev(x)^{-1}z)_z \leq \epsilon \sqrt{n} \big )\to 1,$$
as $n \to \infty$.
\end{proposition}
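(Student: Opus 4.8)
The plan is to reduce the statement about the counting measure $\lambda_n$ on closed paths to a statement about the Markov measure $\nu_n$, using the tools already assembled in the paper, and then to control the Gromov product $(\ev(x)z, \ev(x)^{-1}z)_z$ by relating it to the geometry of the bi-infinite extension of the path. First I would recall that for a closed path $x$ of length $n$ representing a conjugacy class $[g]$ with $g = \ev(x)$, the quantity $(gz, g^{-1}z)_z$ measures how far the geodesic axis of $g$ passes from the basepoint $z$ (up to $O(\delta)$). Concretely, if $x$ is a cyclically reduced/tight closed path, then reading $x$ forward gives a quasigeodesic ray tracking one end of the axis and reading $x$ backward gives a quasigeodesic ray tracking the other end; the Gromov product $(\ev(x)z, \ev(x)^{-1}z)_z$ is then comparable to the distance from $z$ to the axis, which in turn is controlled by the length of the longest common prefix of the forward ray and the forward ray of a suitable cyclic conjugate. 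So the heart of the matter is: for a \emph{random} closed path of length $n$, this "overlap" is $o(\sqrt n)$ with probability tending to $1$ — in fact one expects it to be $O(\log n)$ typically.

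The key mechanism I would use is the same prefix-truncation trick as in Section 4 combined with Proposition \ref{P:RN-der}: transfer the event in question from $\lambda_n$ to $\nu_n$ (where it becomes a statement about the stationary Markov chain), prove it there using mixing/large-deviation estimates, and transfer back. More precisely, I would first observe (via Lemma \ref{L:Rn}, or directly via $\Omega^n \supseteq \mathcal{C}_n$ with bounded index) that it suffices to bound the $\nu_n$-measure, since $\lambda_n$ restricted to $\mathcal{C}_n$ and $\nu_n$ differ by a bounded Radon–Nikodym factor on the relevant sets after truncating a prefix of length $\log n$ — any event of $\nu_{n-\log n}$-probability $\to 1$ pulls back to a $\lambda_n$-event of probability $\to 1$ by Proposition \ref{P:RN-der} and Lemma \ref{L:TV}, exactly as in the proof of Theorem \ref{T:disp2}. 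The point is that after cutting off a prefix of length $\log n$ from $x$ and a suffix of length $\log n$ (which changes $\ev(x)$ by a bounded amount in the orbit and changes the Gromov product by at most $O(\log n) = o(\sqrt n)$), the remaining middle is distributed like a Markov chain path, and I can estimate the overlap between $\ev(x^k)z$ (prefix) and $\ev((x)^{-1}_k)z$ (suffix read backward).

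For the Markov-chain estimate itself, the cleanest route is: the length of the common prefix (in $\Gamma$) of two paths that agree for the first $k$ steps is itself a geometric-type random variable under the mixing measure $\nu$, since the probability that a random path and the reverse of a random path share a given initial segment of length $k$ decays exponentially in $k$ (the number of pairs sharing a length-$k$ prefix is $\sim \lambda^{2n - k}$ out of $\sim \lambda^{2n}$ total, roughly). Hence $\nu_n\big(\text{overlap} \ge \epsilon \sqrt n\big) \le C \lambda^{-c \epsilon \sqrt n} \to 0$ for every $\epsilon > 0$; in fact this gives the much stronger statement that the overlap is $O(\log n)$ with high probability. Then invoking stability of quasigeodesics in $\mathbb{H}^2$ (as in Lemma \ref{lem:almost_displacement}) converts the bound on combinatorial overlap in $\Gamma$ into the bound $(\ev(x)z, \ev(x)^{-1}z)_z \le \epsilon\sqrt n + O(1)$ on the actual Gromov product.

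The main obstacle I anticipate is bookkeeping at the "seam" of the closed path: a closed path $x$ of length $n$ does not literally decompose as (prefix)(Markov middle)(suffix) with the suffix being the reverse of an independent path, because the closing-up condition links the start and end, and because $\ev(x)^{-1}$ corresponds to reading the \emph{inverse edge labels in reverse order}, which is a path in a "reversed" graph $\Gamma^{-1}$ rather than in $\Gamma$. One must check that $\Gamma^{-1}$ (or the relevant reversed Markov chain) is again mixing with the same Perron eigenvalue, and that the joint distribution of the first $\log n$ steps of $x$ and the last $\log n$ steps of $x$ becomes asymptotically a product of two stationary Markov distributions — this is where the $\log n$ truncation and the decay of correlations (mixing) of the shift are essential, and is the technical crux. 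Once that independence-up-to-error is in hand, the exponential decay of the overlap is routine and the proposition follows.
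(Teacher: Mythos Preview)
Your overall strategy---transfer the event from $\lambda_n$ to a Markov-type measure, estimate there, transfer back---is the right one, and your sketch of the Markov estimate via exponential decay of the prefix/suffix overlap is essentially what the cited result from \cite{GTT} provides. But you take a detour that creates the very obstacle you then worry about.

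The paper's transfer is much simpler than the one you propose. It does \emph{not} use Proposition~\ref{P:RN-der} or any $\log n$ truncation here. Instead it uses only the crude inclusion $\mathcal{C}_n \subset \Omega^n$ together with Lemma~\ref{L:Rn}: since $\#\Omega^n / \#\mathcal{C}_n$ is bounded, one has
\[
\lambda_n(A) = \frac{\#(A \cap \mathcal{C}_n)}{\#\mathcal{C}_n} \le \frac{\#(A \cap \Omega^n)}{\#\Omega^n} \cdot \frac{\#\Omega^n}{\#\mathcal{C}_n},
\]
so it suffices to show that the bad set $A$ has density $\to 0$ among \emph{all} paths of length $n$, closed or not. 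You actually mention this route parenthetically (``via Lemma~\ref{L:Rn}, or directly via $\Omega^n \supseteq \mathcal{C}_n$''), but then abandon it for the Proposition~\ref{P:RN-der} machinery. Once you work with all paths rather than closed ones there is no closing-up condition, no seam, no need to decorrelate the first and last $\log n$ steps, and no reversed graph $\Gamma^{-1}$ to analyze: the ``main obstacle'' you identify is entirely self-inflicted.

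For the estimate on all paths, the paper observes that the Markov measure $\mathbb{P}_v^n$ starting at any vertex $v$ is uniformly comparable (bounded Radon--Nikodym derivative, no truncation needed) to the uniform measure on $\Omega_v^n$, and then invokes \cite[Section~6]{GTT} directly for $\mathbb{P}_v\big((\ev(x_n)z,\ev(x_n)^{-1}z)_z \ge \epsilon\sqrt{n}\big)\to 0$. Your overlap-and-quasigeodesic-stability argument is a reasonable way to reprove that cited fact, but the paper treats it as a black box.
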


\begin{proof}
Fix $\epsilon > 0$. Define 
$$A =   \big \{ x \in \Omega^\star \ : \ (\ev(x)z, \ev(x)^{-1}z)_z \geq \epsilon \sqrt{|x|} \big \},$$
where $|x|$ denotes the length of $x$. By \cite[Section 6]{GTT} for any vertex $v$ of $\Gamma$ one has  
$$\mathbb{P}_v\left( (\ev(x_n) z, \ev(x_n)^{-1}z)_z \geq \epsilon \sqrt{n} \right )\to 0,$$
where $\mathbb{P}_v$ is the distribution of the Markov chain starting at vertex $v$, and $x_n$ is the $n^{th}$ step. (Lemma 6.2 of \cite{GTT} explicitly gives this statement where $v$ is the ``initial vertex'' of the directed graph, however the same argument gives the more general result for all vertices.)

Next, consider any path $p \in \Omega_v^n$ of length $n$ starting at $v$. If $v$ is the $l^{th}$ vertex of $\Gamma$ and the terminal endpoint of $p$ is the $k^{th}$ vertex, then the $n^{th}$ step measure of $p$ is
$
\mathbb{P}^n_v(p) = \frac{1}{\lambda^n} \frac{v_k}{v_l}.
$
(Here, $\mathbb{P}^n_v$ is the measure supported on $\Omega_v^n$ which is the pushforward of $\mathbb{P}_v$ under the prefix map $\Omega_v \to \Omega_v^n$.)
Hence, as in \cite[Lemma 3.4]{GTT}),  there is a constant $c>1$, depending only on the adjacency matrix $M$, such that for any subset $B \subset \bigcup_{n\ge1}\Omega_v^n$
\[
\frac{1}{c} \cdot \frac{\# (B \cap \Omega_v^n)}{\# \Omega_v^n}  \le \mathbb{P}^n_v(B)\le c \cdot \frac{\# (B \cap \Omega_v^n)}{\# \Omega_v^n}. 
\]
In particular, we conclude that
\begin{align} \label{e:growth}
\frac{\# (A \cap \Omega_v^n)}{\# \Omega_v^n} \leq c \cdot \mathbb{P}_v \left( (\ev(x_n) z, \ev(x_n)^{-1}z)_z \geq \epsilon \sqrt{n} \right ) \to 0,
\end{align}
as $n\to \infty$.

Hence, by summing over all vertices $v \in V$
$$\frac{\# (A\cap \Omega^n)}{\# \Omega^n} \to 0.$$
Then by combining this with Lemma \ref{L:Rn},
$$\lambda_n(A) = \frac{\# (A \cap \mathcal{C}_n) }{\# \mathcal{C}_n} \leq \frac{\# (A \cap \Omega^n)}{\# \Omega^n} \cdot \frac{\# \Omega^n}{\# \mathcal{C}_n} \to 0.$$
\end{proof}

\begin{remark}
The main estimate (\ref{e:growth}) in the proof of Proposition \ref{P:gr} can also be obtained via a trick using more recent work of the authors \cite{GTT2}. Using terminology there, one may define a geodesic graph structure $(G,\Gamma)$ by declaring that $v$ be the initial vertex of $\Gamma$. Such a structure may not be surjective, but this is not necessary. Then \cite[Proposition 5.8]{GTT2} precisely gives the required decay result.
\end{remark}

\subsection*{Proof of Theorem \ref{T:main}}
We can now complete the proof of Theorem \ref{T:main}.

\begin{proof}[Proof of Theorem \ref{T:main}]
Let $d_n := \frac{d(z, \ev(x) z) - n L }{\sigma \sqrt{n}}$, $t_n := \frac{\tau( \ev(x) ) - n L }{\sigma \sqrt{n}}$, and $p_n := t_n - d_n = \frac{2 (\ev(x) z, \ev(x)^{-1} z)_z + O(\delta)}{\sigma \sqrt{n}}$.
By the CLT for displacement (Theorem \ref{T:disp}), for any $a < b$  
$$\lambda_n(x : d_n \in [a, b]) \to N_{a, b}.$$ 
Moreover, by decay of Gromov products for any $\epsilon > 0$  (Proposition \ref{P:gr}) we have
$$\lambda_n(x: |p_n| \geq \epsilon) \to 0$$
hence by Lemma \ref{L:sum} 
$$\lambda_n \left( x \ : \ \frac{\tau(\ev(x)) - n L }{\sigma \sqrt{n}} \in [a, b] \right) \to N_{a, b}.$$
Finally, by Lemma \ref{L:push} this implies 
$$\mu_n \left( \gamma \ : \ \frac{\tau(\gamma) - n L }{\sigma \sqrt{n}} \in [a, b] \right) \to N_{a, b}$$
which completes the proof.
\end{proof}





\section{Generalizations}
While many generalization of Theorem \ref{T:main} are possible, we record the most immediate one here. The proof is the same as the one given above.

\begin{theorem}
Let $G\curvearrowright X$ be any convex cocompact action of a closed orientable surface group on a $\mathrm{CAT}(-1)$ space.
Then the conclusion of Theorem \ref{T:main} holds. The same is true for any convex cocompact free group action on a $\mathrm{CAT}(-1)$ space  $X$ as long as the lengths of closed geodesics on $X/G$ are not all contained in $c\Z$ for some $c>0$.
\end{theorem}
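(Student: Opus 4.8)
The plan is to retrace the entire chain of arguments from Sections 2 through 5 in the more general setting $G \curvearrowright X$, where $X$ is a $\mathrm{CAT}(-1)$ space and the action is convex cocompact. First I would observe that the coding of conjugacy classes is an entirely combinatorial/group-theoretic input: Lemma \ref{L:code} (for free groups, or via Series/Wroten for closed surface groups) provides the aperiodic graph $\Gamma$, the evaluation map $\ev$, the Perron--Frobenius data $\lambda, \mathbf{u}, \mathbf{v}$, and the Parry measure $\nu$ on $\Omega$; none of this depends on the chosen geometry. Likewise Lemma \ref{L:push} (comparing $(p_n)_\star \lambda_n$ with $\mu_n$), Lemma \ref{L:Rn} (comparing $\#\Omega^n$ with $\#\mathcal{C}_n$), and the basic probability lemmas \ref{L:basic}--\ref{L:TV} and Proposition \ref{P:RN-der} carry over verbatim. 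What changes is only the target of the orbit map: we replace $z \in \mathbb{H}^2$ by a basepoint $z \in X$, and $d$ by the metric on $X$.

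Next I would verify that the two geometric inputs used in Section 3 survive. The H\"older estimate on displacement, Lemma \ref{L:Holder}, is already stated by Pollicott--Sharp for $\mathrm{CAT}(-k)$ spaces with $k>0$; since $\mathrm{CAT}(-1)$ is the case $k=1$, the inequality $|DF_s(g) - DF_s(h)| \le C\alpha^{-(g,h)}$ holds, and as before this gives H\"older continuity of $DF$ on $\Omega^\star$ and hence an extension to $\Omega$. The quasigeodesic argument in Lemma \ref{lem:almost_displacement} only used that $G \curvearrowright X$ is convex cocompact (so orbit maps are quasi-isometric embeddings) together with stability of quasigeodesics in a Gromov hyperbolic space, which holds in any $\mathrm{CAT}(-1)$ space. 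Thus Theorem \ref{T:CLT} applies to $DF$ and gives the CLT for $F_n$, provided $\sigma > 0$; this is exactly where the hypothesis enters. Finally, Theorem \ref{T:disp2} and Proposition \ref{P:gr} only used the Lipschitz property of the orbit map and the decay of Gromov products from \cite{GTT} (equivalently \cite{GTT2}), both of which are valid for convex cocompact actions on $\mathrm{CAT}(-1)$ spaces; then formula \eqref{E:tau} for the translation length in terms of $d(gz,z)$ and the Gromov product $(gz, g^{-1}z)_z$ holds in any $\delta$-hyperbolic space, so the proof of Theorem \ref{T:main} goes through unchanged.

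The one genuinely new point — and the main obstacle — is the positivity of the variance $\sigma$, i.e. the analogue of Lemma \ref{lem:sig_pos}. The argument there shows that if $DF = u - u\circ T + L$ then $\tau([g]) = L\Vert g\Vert$ for all but finitely many conjugacy classes $[g]$, i.e. all but finitely many closed geodesics on $X/G$ have length in $L\mathbb{Z}$. For a hyperbolic surface one invokes the classical fact that there are two closed geodesics of incommensurable lengths; for a general $\mathrm{CAT}(-1)$ space no such fact is available, which is precisely why the theorem must include the hypothesis that the lengths of closed geodesics on $X/G$ are not all contained in $c\mathbb{Z}$ (in the free case; in the closed surface case this holds automatically, since the surface-group action gives incommensurable lengths just as in the Fuchsian case — or one can simply include it as a hypothesis and note it is automatic). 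So I would conclude the proof of Lemma \ref{lem:sig_pos} in this setting by: assuming the coboundary equation, deriving $\tau([g]) = L\Vert g\Vert$ for cofinitely many $[g]$ exactly as before (the computation $F_n(x) = nL$ for periodic $x$ and $F_n(x) = \lim_k \frac1k d(z,g^k z) = \tau(g)$ only uses Lemma \ref{lem:almost_displacement}), and then observing that this forces all but finitely many closed geodesic lengths into $L\mathbb{Z}$, contradicting the hypothesis. Hence $\sigma > 0$, and the rest of the proof is identical to that of Theorem \ref{T:main}.
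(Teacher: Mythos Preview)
Your proposal is correct and is precisely the paper's approach: the paper's proof of this theorem is the single sentence ``The proof is the same as the one given above,'' and you have correctly spelled out why each ingredient (the coding, the $\mathrm{CAT}(-k)$ H\"older estimate of Lemma~\ref{L:Holder}, the quasigeodesic bound in Lemma~\ref{lem:almost_displacement}, formula~\eqref{E:tau}, the Gromov-product decay from \cite{GTT}, and the non-arithmeticity hypothesis in place of the appeal to \cite{Dal'bo}) survives the passage from $\mathbb{H}^2$ to a general $\mathrm{CAT}(-1)$ target. One small refinement: the automatic non-arithmeticity in the closed-surface case is not by direct analogy with the Fuchsian situation but follows from Bourdon's theorem \cite{Bourdon} (the limit set of a convex cocompact surface group has an infinite connected component), as the paper records immediately after the statement.
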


The condition on lengths of closed geodesics is known to hold whenever $G\curvearrowright X$ is a discrete action on a $\mathrm{CAT}(-1)$ satisfying at least one of the following:
 \begin{itemize}
\item the limit set $\Lambda(G)\subset \partial X$ has an infinite connected component (in particular when $G$ is a surface group) \cite{Bourdon};
\item $X$ is itself a surface (so that $X/G$ is a locally $\mathrm{CAT}(-1)$ surface) \cite{Dal'bo}; 
\item $X$ is a rank 1 symmetric space \cite{Kim}.
\end{itemize}


\begin{thebibliography}{99}

\bibitem{Bourdon} 
M. Bourdon, \textit{Structure conforme au bord et flot geodesique d'un CAT(-1)-espace},  L'enseignement mathematique \textbf{41} (1995), 63-102.

\bibitem{Bowen}
Rufus Bowen,
\textit{Equilibrium States and the Ergodic Theory of Anosov Diffeomorphisms},
Lect. Notes in Math. {\bf 470}, Springer, 1975.

\bibitem{Ca}
Danny Calegari, \textit{The ergodic theory of hyperbolic groups}, Contemp. Math. \textbf{597} (2013), 15-52.

\bibitem{CF}
Danny Calegari and Koji Fujiwara, \textit{Combable functions, quasimorphisms, and the central limit theorem}, Ergodic Theory and Dynamical Systems \textbf{30}, no.5 (2010), 1343-1369.

\bibitem{CL}
Moira Chas and Steven P. Lalley, \textit{Self-intersections in combinatorial topology: statistical structure}, Invent. Math. \textbf{188}, no.2 (2012), 429-463.

\bibitem{CLM} 
Moira Chas, Keren Li, and Bernard Maskit, 
\textit{Experiments suggesting that the distribution of the hyperbolic length of closed geodesics sampling by word length is Gaussian}, Experimental Mathematics \textbf{22}, no. 4 (2013), 367-371.

\bibitem{CP}
Zaqueu Coelho and William Parry, \textit{Central limit asymptotics for shifts of finite type}, Israel J. Math. {\bf 69} (1990), no. 2, 235-249.

\bibitem {Dal'bo}
Francoise Dal'bo. \textit{Remarques sur le spectre des longueurs d'une surface et comptages}, Bol. Soc. Bras. Mat. {\bf30}, no. 2, 199-221.

\bibitem{GTT}
Ilya Gekhtman, Samuel J. Taylor, and Giulio Tiozzo, \textit{Counting loxodromics for hyperbolic actions}, Journal of Topology \textbf{11}, no. 2 (2018), 379-419.

\bibitem{GTT2}
Ilya Gekhtman, Samuel J. Taylor, and Giulio Tiozzo, 
\textit{Counting problems in graph products and relatively hyperbolic groups},
Arxiv preprint, arXiv:1711.04177 [math.GT].

\bibitem{HS}
Matthew Horsham and Richard Sharp,
\textit{Lengths, quasi-morphisms and statistics for free groups},
Contemp. Math. \textbf{14} (2009), 219 - 237.

\bibitem{Kim} 
Inkang Kim, \textit{ Length spectrum in rank-1 symmetric spaces is not arithmetic}, Proc. Amer. Math. Soc. \textbf{134} (2006), 3691-3696. 

\bibitem{MT}
Joseph Maher and Giulio Tiozzo, \textit{Random walks on weakly hyperbolic groups}, J. Reine Angew. Math. \textbf{742} (2018), 187-239.

\bibitem{Park}
Peter S. Park,
\textit{Probability laws for the distribution of geometric lengths when sampling by a random walk in a Fuchsian fundamental group}, Arxiv preprint, arXiv:1807.03775 [math.GT].

\bibitem{PS}
Mark Pollicott and Richard Sharp, 
\textit{Comparison theorems and orbit counting in hyperbolic geometry}, 
Transactions of the AMS \textbf{350}, no. 2 (1998), 473--499.

\bibitem{PS2}
Mark Pollicott, and Richard Sharp,
\textit{Poincar\'e series and comparison theorems for variable negative curvature},
in \textit{Topology, ergodic theory, real algebraic geometry},
Amer. Math. Soc. Transl. Ser. 2, \textbf{202} (2001), 229--240, Amer. Math. Soc., Providence, RI.

\bibitem{PS3}
Mark Pollicott, and Richard Sharp, 
\textit{Correlations for pairs of closed geodesics}, 
Invent. Math. {\bf 163}, no. 1  (2006), 1--24.

\bibitem{Ruelle}
David Ruelle,
\textit{Thermodynamic Formalism},
Cambridge University Press, 2004.

\bibitem{Se}
Caroline Series, 
\textit{The infinite word problem and limit sets in Fuchsian groups}, 
Ergodic Theory Dynam. Systems {\bf 1} (1981), 337-360.

\bibitem{Walters}
Peter Walters,
\textit{An Introduction to Ergodic Theory},
Grad. Texts in Math. {\bf 79}, Springer,  2000.

\bibitem{Wr}
Matthew Wroten, 
\textit{The eventual Gaussian distribution for self-intersection numbers on closed surfaces}, 
Ph.D. thesis, Stony Brook, 2013. 

\end{thebibliography}
\end{document}